 \newtheorem{thm}{Theorem}[section]
 \newtheorem{cor}[thm]{Corollary}
 \newtheorem{lem}[thm]{Lemma}
 \theoremstyle{definition}
 \newtheorem{defin}[thm]{Definition}
 \theoremstyle{remark}
 \newtheorem{rem}[thm]{Remark}
 \numberwithin{equation}{section}
\begin{document}

%
%
%
%
%
%
%
%
%
\title[On Some Properties of Matrices with Entries Defined by Products of $k$-Fibonacci and $k$-Lucas Numbers]
{On Some Properties of Matrices with Entries Defined by Products of $k$-Fibonacci and $k$-Lucas Numbers}
\author[PFFE]{Pedro Fernando Fern\'andez Espinosa}
\address{Semillero de Investigación ITENUA\\
	Escuela de Matemáticas y Estadistica  \\
	Universidad Pedagógica y Tecnológica de Colombia \\
	Carrera 18 con Calle 22\\
	Duitama - Boyacá\\
    ORCID ID: 0000-0002-2650-4536}
\email{pedro.fernandez01@uptc.edu.co}
\thanks{This work was completed with the support of Universidad Pedagógica y Tecnológica de Colombia.}
\author[MLAT]{Maritza Liliana Arciniegas Torres}
\address{Semillero de Investigación ITENUA\\
    Escuela de Matemáticas y Estadistica  \\
	Universidad Pedagógica y Tecnológica de Colombia \\
	Carrera 18 con Calle 22\\
	Duitama - Boyacá\\
    ORCID ID: 0009-0003-5189-9976}
\email{maritza.arciniegas@uptc.edu.co}

\author[CAAC]{Camilo Andrés Acevedo Cadena }
\address{Semillero de Investigación ITENUA\\
    Escuela de Matemáticas y Estadistica  \\
	Universidad Pedagógica y Tecnológica de Colombia \\
	Carrera 18 con Calle 22\\
	Duitama - Boyacá\\
    ORCID ID: 0009-0002-8366-4277}
\email{camilo.acevedo02@uptc.edu.co}

\subjclass{11B39, 11C20, 15B36, 15A18}
\keywords{Spectral radius, characteristic polynomial, eigenvalues, determinants, $k$-Fibonacci Numbers, $k$-Lucas Numbers, integer numbers sequence}
\date{January 10, 2026}
\begin{abstract}
In this paper, we study a structured family of matrices whose entries are given by products of $k$-Fibonacci and $k$-Lucas numbers. For this family, we obtain explicit and unified formulas for several classical matrix invariants, including the determinant, inverse, trace, and matrix powers, revealing nontrivial algebraic patterns induced by the underlying recurrence relations. In addition, we determine the spectral radius and the energy of the graphs naturally associated with these matrices. Finally, we establish connections between the resulting formulas and certain integer sequences recorded in the On-Line Encyclopedia of Integer Sequences (OEIS).
\end{abstract}
\maketitle
\section{Introduction}

\noindent Integer sequences play a fundamental role in many areas of mathematics and its applications. Within this vast landscape, certain sequences stand out due to their theoretical significance and practical relevance. Among the most notable examples are the Fibonacci numbers, the Lucas numbers, and their corresponding generalizations, which exhibit a wide range of remarkable properties and find applications across numerous fields of science and art. According to Koshy (see \cite{koshy}), the Fibonacci and Lucas sequences are among the most prominent and influential integer sequences. These sequences have fascinated both amateur and professional mathematicians for centuries and continue to attract considerable interest owing to their structural elegance, extensive applications, and frequent appearance in diverse and seemingly unrelated contexts. Consequently, they remain an active and fertile area of research in contemporary mathematics.
\par\smallskip

\noindent
In the context of linear algebra and matrix theory, matrices with integer entries have been extensively studied. Among the numerous contributions in this direction, works by M.~Janji\'{c} \cite{Janjic2010}, D.~Bozkurt \cite{Bozkurt2013}, T.~M.~Cronin \cite{Cronin2018}, J.~Rimas \cite{Rimas2007}, and other authors (see, for instance, \cite{BahsiSolak2015,BicknellJohnsonSpears1996}) focus on structured matrices, such as Toeplitz and Hessenberg matrices, with integer entries. These studies address a broad spectrum of topics, ranging from classical properties, including determinants and traces, to more recent aspects such as characteristic polynomials, eigenvalues, spectral radius, matrix norms, and applications to graph energy, among others.

\par\smallskip

\noindent
For the particular case of the Fibonacci and Lucas sequences, as well as their generalizations such as the $k$-Fibonacci sequence, research has primarily focused on the study of various classes of matrices whose entries involve these numbers. For instance, in \cite{LeeLeeShin1997} and \cite{LeeKim2003}, the authors introduce $k$-Fibonacci matrices and their generalizations and establish several results describing their fundamental properties. In particular, special attention is devoted to the behavior of their eigenvalues and to their connections with graph theory. Similarly, related investigations have been carried out for tridiagonal Fibonacci matrices by Cahill \cite{CahillNarayan2004}, for tridiagonal constructions by Goy \cite{GoyShattuck2020}, for pentadiagonal matrices by \.{I}pek \cite{IpekAri2014}, for further generalizations by Karaduman \cite{Karaduman2005,Karaduman2008} and Solak \cite{Solak2005,SolakBozkurt2005}, and more recently by Doumas \cite{Doumas2025}, where a Redheffer-type matrix with Fibonacci entries is introduced and its determinant and spectral properties are studied.
\par\smallskip

\noindent
In this work, we focus on the study of two families of matrices. The first family was introduced by Michel Bataille in Elementary Problem~B1360 of The Fibonacci Quarterly (see \cite{fibonacci2024}). The second family is defined by the authors as a generalization of the first one, whose entries are given by products of $k$-Fibonacci and $k$-Lucas numbers. For these matrices, we employ combinatorial and algebraic techniques to derive closed-form expressions for several classical matrix invariants, including the determinant, inverse, trace, and matrix powers. Furthermore, we investigate spectral properties such as the spectral radius and the energy of the graphs associated with these matrices.
\par\smallskip

\noindent
To achieve these results, in some cases we exploit the structural properties of the $k$-Fibonacci and $k$-Lucas numbers together with elementary tools from linear algebra. In other cases, we employ more advanced techniques, such as the Sherman-Morrison lemma, among others. Finally, and of equal importance, we establish connections between the derived formulas and several integer sequences recorded in the On-Line Encyclopedia of Integer Sequences (OEIS).
\par\smallskip

\noindent
This paper is organized as follows. In Section~\ref{preliminaries}, we recall the main notation and definitions concerning Fibonacci numbers, Lucas numbers, $k$-Fibonacci numbers, and $k$-Lucas numbers, as well as basic concepts from spectral theory.
\par\smallskip

\noindent
In Section~\ref{Generic}, we study classical properties such as the determinant, inverse, trace, and powers of the matrices introduced by Michel Bataille. In addition, we analyze the spectrum of these matrices and identify several integer sequences from the On-Line Encyclopedia of Integer Sequences (OEIS) that arise from these computations.
\par\smallskip

\noindent
Finally, in Section~\ref{General}, we introduce a new family of matrices that generalizes those studied in Section~\ref{Generic}. For this family, we present a suitable factorization that allows us to derive closed-form expressions for several classical matrix invariants, including the determinant, inverse, trace, and matrix powers. Moreover, we compute the spectral radius and the energy of the graphs associated with this family of matrices. As in Section~\ref{Generic}, we also investigate connections between the obtained formulas and certain integer sequences listed in the On-Line Encyclopedia of Integer Sequences (OEIS).

\section{Preliminaries}\label{preliminaries}
In this section, we recall the main definitions and notation that will be used throughout the paper (see, for example, \cite{koshy,Falcon2011,FalconPlaza2007,FalconPlaza2009}).
\par\smallskip

\noindent
The Fibonacci numbers $\{F_n\}_{n\geq 0}$ form the sequence $0,1,1,2,3,5,\ldots$, where each term is the sum of the two preceding ones, with initial conditions $F_0=0$ and $F_1=1$. As a generalization of this family, introduced by Falcon \cite{Falcon2011}, for any integer $k\geq 1$ the $k$-Fibonacci sequence $\{F_{k,n}\}_{n\in\mathbb{N}}$ is defined recursively by
\[
F_{k,0}=0,\quad F_{k,1}=1,\quad \text{and}\quad F_{k,n+1}=kF_{k,n}+F_{k,n-1},\qquad n\geq 1.
\]

\par\smallskip
\noindent
From this definition, the first values of the $k$-Fibonacci numbers are listed in Table~\ref{tab1}. These expressions allow one to compute any $k$-Fibonacci number by direct substitution.

\begin{table}[ht]
\centering
\[
\begin{aligned}
& F_{k,1} = 1, \\
& F_{k,2} = k, \\
& F_{k,3} = k^2 + 1, \\
& F_{k,4} = k^3 + 2k, \\
& F_{k,5} = k^4 + 3k^2 + 1, \\
& F_{k,6} = k^5 + 4k^3 + 3k, \\
& F_{k,7} = k^6 + 5k^4 + 6k^2 + 1, \\
& F_{k,8} = k^7 + 6k^5 + 10k^3 + 4k.
\end{aligned}
\]
\caption{First values of the $k$-Fibonacci numbers}
\label{tab1}
\end{table}

\noindent
Several $k$-Fibonacci sequences are indexed in the On-Line Encyclopedia of Integer Sequences \cite{OEIS} (OEIS). In particular,
\begin{itemize}
    \item $\{F_{1,n}\}=\{0,1,1,2,3,5,8,\ldots\}$ (A000045),
    \item $\{F_{2,n}\}=\{0,1,2,5,12,29,\ldots\}$ (A000129),
    \item $\{F_{3,n}\}=\{0,1,3,10,33,109,\ldots\}$ (A006190).
\end{itemize}

\noindent
As particular cases, when $k=1$ one recovers the classical Fibonacci sequence, whereas for $k=2$ the Pell sequence arises.

\par\smallskip
\noindent
Similarly, for any integer $k\geq 1$, the $k$-Lucas sequence $\{L_{k,n}\}_{n\geq 0}$ is defined by the recurrence
\[
L_{k,n+1}=kL_{k,n}+L_{k,n-1},\qquad n\geq 1,
\]
with initial conditions $L_{k,0}=2$ and $L_{k,1}=k$.

\par\smallskip
\noindent
The first values of the $k$-Lucas numbers are listed in Table~\ref{tab2}.

\begin{table}[ht]
\centering
\[
\begin{aligned}
& L_{k,0}=2, \\
& L_{k,1}=k, \\
& L_{k,2}=k^2+2, \\
& L_{k,3}=k^3+3k, \\
& L_{k,4}=k^4+4k^2+2, \\
& L_{k,5}=k^5+5k^3+5k, \\
& L_{k,6}=k^6+6k^4+9k^2+2, \\
& L_{k,7}=k^7+7k^5+14k^3+7k.
\end{aligned}
\]
\caption{First values of the $k$-Lucas numbers}
\label{tab2}
\end{table}

\noindent
Notable special cases include the classical Lucas sequence for $k=1$ and the Pell--Lucas sequence for $k=2$.

\par\smallskip
\noindent
Several $k$-Lucas sequences are also recorded in the OEIS \cite{OEIS}, including
\begin{itemize}
    \item $\{L_{1,n}\}=\{2,1,3,4,7,11,18,29,\ldots\}$ (A000032),
    \item $\{L_{2,n}\}=\{2,2,6,14,34,82,198,478,\ldots\}$ (A002203),
    \item $\{L_{3,n}\}=\{2,3,11,36,119,393,1298,4287,\ldots\}$ (A006497).
\end{itemize}

\par\smallskip
\noindent
The Fibonacci numbers, Lucas numbers, $k$-Fibonacci numbers, and $k$-Lucas numbers satisfy a wide variety of algebraic and combinatorial identities (see, for example, \cite{koshy,Falcon2011,FalconPlaza2009}). For the purposes of this paper, we focus on a small collection of identities that play a fundamental role in the analysis developed in the subsequent sections.
\par\smallskip

\noindent In particular, we establish and prove four key identities: two classical identities involving Fibonacci and Lucas numbers, and two corresponding identities for their generalizations in terms of $k$-Fibonacci and $k$-Lucas numbers. These identities play a fundamental role and will be repeatedly used in the derivation of closed-form expressions for the matrix properties studied throughout the paper.

\begin{thm}\label{restagen}
For $k \geq 1$, the following identity holds:
\[
F_{k,2n+1}L_{k,2n+2}-F_{k,2n+2}L_{k,2n+1}=2.
\]
\end{thm}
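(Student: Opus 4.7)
The plan is to prove the identity via the Binet-type closed forms for $F_{k,n}$ and $L_{k,n}$, since these reduce the statement to a short computation with exponentials. Both sequences satisfy the recurrence $x^2 = kx + 1$, whose characteristic roots are $\alpha = (k+\sqrt{k^2+4})/2$ and $\beta = (k-\sqrt{k^2+4})/2$. These roots satisfy $\alpha+\beta = k$, $\alpha-\beta = \sqrt{k^2+4}$, and crucially $\alpha\beta = -1$. Under the usual initial conditions one obtains the standard representations
\[
F_{k,n} = \frac{\alpha^n - \beta^n}{\alpha - \beta}, \qquad L_{k,n} = \alpha^n + \beta^n.
\]

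First, I would establish the more general identity $F_{k,m}L_{k,r} - F_{k,r}L_{k,m} = 2(-1)^{m+1}F_{k,r-m}$ for $r \geq m$. Substituting the Binet forms and expanding the two products, the terms $\alpha^{m+r}$ and $\beta^{m+r}$ cancel, leaving
\[
F_{k,m}L_{k,r} - F_{k,r}L_{k,m} = \frac{2\bigl(\alpha^m\beta^r - \alpha^r\beta^m\bigr)}{\alpha - \beta}.
\]
Factoring $(\alpha\beta)^m = (-1)^m$ out of the numerator and recognizing the remaining factor $\beta^{r-m} - \alpha^{r-m} = -(\alpha - \beta)F_{k,r-m}$ gives the claimed general formula.

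Next, I would specialize to $m = 2n+1$ and $r = 2n+2$, so that $r-m = 1$ and $(-1)^{m+1} = (-1)^{2n+2} = 1$. Since $F_{k,1} = 1$, the right-hand side becomes exactly $2$, which is the stated identity.

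The main obstacle is really just careful sign bookkeeping: one has to apply $\alpha\beta = -1$ at the right moment and track the parity of $m$ correctly; beyond that, the argument is a direct calculation. As an alternative path, one can argue by induction on $n$, using the base case $F_{k,1}L_{k,2} - F_{k,2}L_{k,1} = (k^2+2) - k^2 = 2$ and the recurrences $F_{k,j+1} = kF_{k,j} + F_{k,j-1}$ and $L_{k,j+1} = kL_{k,j} + L_{k,j-1}$ applied twice to step from index $2n+1$ to $2n+3$; this route avoids irrational numbers but leads to a somewhat longer computation that essentially recovers the same cancellations.
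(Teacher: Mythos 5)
Your proof is correct, but it follows a genuinely different route from the paper. The paper converts the Lucas factors via $L_{k,n}=F_{k,n-1}+F_{k,n+1}$ so that the whole left-hand side becomes a combination of $k$-Fibonacci products, and then applies the Simson identity $F_{k,n-1}F_{k,n+1}-F_{k,n}^2=(-1)^n$ twice to obtain $(-1)^{2n+2}-(-1)^{2n+1}=2$; everything stays inside integer identities already quoted from Falc\'on's work. You instead pass to the Binet representations $F_{k,n}=(\alpha^n-\beta^n)/(\alpha-\beta)$, $L_{k,n}=\alpha^n+\beta^n$ with $\alpha\beta=-1$, and prove the stronger cross-product identity $F_{k,m}L_{k,r}-F_{k,r}L_{k,m}=2(-1)^{m+1}F_{k,r-m}$, of which the theorem is the case $m=2n+1$, $r=2n+2$; your expansion and the factoring of $(\alpha\beta)^m$ check out, and the specialization $F_{k,1}=1$, $(-1)^{2n+2}=1$ gives $2$. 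Your general identity is in fact the antisymmetrization of the product formula $F_{k,m}L_{k,n}=F_{k,m+n}-(-1)^mF_{k,n-m}$ that the paper itself uses later (in Theorem~\ref{sumgen} and in the $k=1$ determinant proof via Ferns's identity), so your lemma meshes naturally with the rest of the paper and makes the constant $2$ transparent as $2F_{k,1}$; the paper's argument has the advantage of never leaving $\mathbb{Z}$ and of reusing only the two cited Falc\'on identities. Your suggested inductive alternative, with base case $F_{k,1}L_{k,2}-F_{k,2}L_{k,1}=2$, is also sound.
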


\begin{proof}
According to Theorem~2.4 in \cite{Falcon2011}, the $k$-Lucas numbers satisfy
\[
L_{k,n}=F_{k,n-1}+F_{k,n+1}, \qquad n \geq 1.
\]
Applying this identity for $n=2n+1$ and $n=2n+2$, we obtain
\[
L_{k,2n+2}=F_{k,2n+1}+F_{k,2n+3}
\]
and
\[
L_{k,2n+1}=F_{k,2n}+F_{k,2n+2}.
\]
Substituting these expressions into the left-hand side of the identity, we get
\begin{align*}
F_{k,2n+1}L_{k,2n+2}-F_{k,2n+2}L_{k,2n+1}
&=F_{k,2n+1}\left(F_{k,2n+1}+F_{k,2n+3}\right)
 -F_{k,2n+2}\left(F_{k,2n}+F_{k,2n+2}\right)\\
&=F_{k,2n+1}^2+F_{k,2n+1}F_{k,2n+3}
 -F_{k,2n+2}F_{k,2n}-F_{k,2n+2}^2.
\end{align*}
Rearranging terms, this expression can be written as
\[
F_{k,2n+1}F_{k,2n+3}-F_{k,2n+2}^2
-\left(F_{k,2n+2}F_{k,2n}-F_{k,2n+1}^2\right).
\]
By the Simson identity,
\[
F_{k,n-1}F_{k,n+1}-F_{k,n}^2=(-1)^n,
\]
and applying it for $n=2n+2$ and $n=2n+1$, respectively, we obtain
\[
F_{k,2n+1}L_{k,2n+2}-F_{k,2n+2}L_{k,2n+1}
=(-1)^{2n+2}-(-1)^{2n+1}=1-(-1)=2,
\]
which completes the proof.
\end{proof}

\begin{cor}\label{aux1}
For $k=1$, the following identity holds:
\[
F_{2n+1}L_{2n+2}-F_{2n+2}L_{2n+1}=2.
\]
\end{cor}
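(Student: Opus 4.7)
The plan is to treat this corollary as an immediate specialization of the theorem just proved. I would first note that the case $k=1$ of the $k$-Fibonacci and $k$-Lucas sequences coincides with the classical Fibonacci and Lucas sequences: that is, $F_{1,n}=F_n$ and $L_{1,n}=L_n$ for every $n\geq 0$, as follows directly from comparing the defining recurrences and initial conditions (both families satisfy $a_{n+1}=a_n+a_{n-1}$ with $F_0=0$, $F_1=1$ and $L_0=2$, $L_1=1$).

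With this identification in place, the identity to be proved is simply the $k=1$ instance of Theorem~\ref{restagen}. Thus I would invoke Theorem~\ref{restagen} with $k=1$ to obtain
\[
F_{1,2n+1}L_{1,2n+2}-F_{1,2n+2}L_{1,2n+1}=2,
\]
and rewrite the left-hand side using $F_{1,m}=F_m$ and $L_{1,m}=L_m$ to conclude
\[
F_{2n+1}L_{2n+2}-F_{2n+2}L_{2n+1}=2.
\]

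There is no real obstacle here: the corollary is literally the specialization of Theorem~\ref{restagen} to $k=1$, so the only thing to make explicit is the identification of the classical Fibonacci/Lucas sequences with the $k=1$ members of the generalized families. No further algebraic manipulation is needed.
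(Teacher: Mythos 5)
Your proposal is correct and matches the paper's intent exactly: the paper states Corollary~\ref{aux1} without proof, treating it as the immediate $k=1$ specialization of Theorem~\ref{restagen}, which is precisely your argument. Making the identification $F_{1,n}=F_n$, $L_{1,n}=L_n$ explicit via the recurrences and initial conditions is a reasonable (if routine) addition, and nothing further is needed.
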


\begin{thm}\label{aux2}
The following identity holds:
\[
\sum_{i=1}^{n-1}F_{2i}L_{2i-1}+F_{2n-1}L_{2n}
=
2+\sum_{i=1}^{n}F_{2i}L_{2i-1}.
\]
\end{thm}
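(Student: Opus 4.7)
The plan is to reduce the claimed identity to Corollary \ref{aux1} by an elementary cancellation. The key observation is that both sides of the equation contain almost the same partial sum, so once we isolate the leftover terms, what remains should be a single instance of the identity already proven.

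First I would split off the last term on the right-hand side, writing
\[
2+\sum_{i=1}^{n}F_{2i}L_{2i-1}
=
2+\sum_{i=1}^{n-1}F_{2i}L_{2i-1}+F_{2n}L_{2n-1}.
\]
After subtracting the common sum $\sum_{i=1}^{n-1}F_{2i}L_{2i-1}$ from both sides, the identity we need to prove collapses to
\[
F_{2n-1}L_{2n}-F_{2n}L_{2n-1}=2.
\]
This is exactly the statement of Corollary \ref{aux1} after the index shift $n \mapsto n-1$ (that is, applying $F_{2m+1}L_{2m+2}-F_{2m+2}L_{2m+1}=2$ with $m=n-1$). One could equivalently invoke Theorem \ref{restagen} directly with $k=1$.

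I do not expect any real obstacle here: the result is essentially a restatement of the previous corollary with the surrounding telescoping sums stripped away. The only point to take some care with is the indexing, specifically verifying that the shift $n \mapsto n-1$ in Corollary \ref{aux1} produces precisely the residual term $F_{2n-1}L_{2n}-F_{2n}L_{2n-1}$ that remains after the cancellation, and that the claim is intended for $n \geq 1$ so that the sum $\sum_{i=1}^{n-1}$ (which is empty when $n=1$) is well defined.
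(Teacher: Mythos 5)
Your proposal is correct and follows essentially the same route as the paper: both split off the term $F_{2n}L_{2n-1}$ from the sum on the right-hand side and reduce the identity to $F_{2n-1}L_{2n}-F_{2n}L_{2n-1}=2$, which is Corollary~\ref{aux1} after the index shift $n\mapsto n-1$. Your explicit attention to that index shift is a point the paper glosses over, but the argument is the same.
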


\begin{proof}
Starting from the right-hand side, we have
\[
2+\sum_{i=1}^{n}F_{2i}L_{2i-1}
=
2+F_{2n}L_{2n-1}+\sum_{i=1}^{n-1}F_{2i}L_{2i-1}.
\]
By Corollary~\ref{aux1}, we know that
\[
F_{2n}L_{2n-1}=F_{2n-1}L_{2n}-2.
\]
Substituting this expression into the previous equality yields
\[
2+\sum_{i=1}^{n}F_{2i}L_{2i-1}
=
2+F_{2n-1}L_{2n}-2+\sum_{i=1}^{n-1}F_{2i}L_{2i-1},
\]
which proves the desired identity.
\end{proof}

\begin{thm}\label{sumgen}
For $k \geq 1$, the following identity holds:
\[
\sum_{j=1}^{n}F_{k,2j}L_{k,2j-1}
=
\frac{F_{k,4n+3}-F_{k,4n-1}+F_{k,5}-1}{L_{k,4}-2}-1-n.
\]
\end{thm}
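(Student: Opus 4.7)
The plan is to reduce each product $F_{k,2j}L_{k,2j-1}$ to a single $k$-Fibonacci term and then evaluate the resulting sum by exploiting the second-order recurrence satisfied by the subsequence $(F_{k,4j-1})_{j\geq 0}$.

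As a first step, I would establish the product identity
\[
F_{k,2j}L_{k,2j-1}=F_{k,4j-1}-1.
\]
The cleanest derivation is via the Binet representations $F_{k,n}=(\alpha^n-\beta^n)/(\alpha-\beta)$ and $L_{k,n}=\alpha^n+\beta^n$, where $\alpha,\beta$ are the roots of $x^2-kx-1$ and hence satisfy $\alpha\beta=-1$; expanding the product collapses the cross terms to $(\alpha\beta)^{2j-1}(\alpha-\beta)=-(\alpha-\beta)$, giving the claim. Alternatively, one may combine Theorem~2.4 of \cite{Falcon2011} (to expand $L_{k,2j-1}$), the Simson identity already invoked in Theorem~\ref{restagen}, and the convolution $F_{k,m+n}=F_{k,m}F_{k,n+1}+F_{k,m-1}F_{k,n}$. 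Summing over $j=1,\ldots,n$ yields
\[
\sum_{j=1}^{n}F_{k,2j}L_{k,2j-1}=S_n-n,\qquad S_n:=\sum_{j=1}^{n}F_{k,4j-1}.
\]

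To evaluate $S_n$, I would set $G_j:=F_{k,4j-1}$ and observe that, since $\alpha^4$ and $\beta^4$ have sum $L_{k,4}$ and product $(\alpha\beta)^4=1$, the sequence $G_j$ satisfies
\[
G_{j+1}=L_{k,4}\,G_j-G_{j-1},\qquad j\geq 1.
\]
Rewriting this as $(L_{k,4}-2)G_j=(G_{j+1}-G_j)-(G_j-G_{j-1})$ and summing from $j=1$ to $n$ telescopes to
\[
(L_{k,4}-2)\,S_n=(G_{n+1}-G_n)-(G_1-G_0)=F_{k,4n+3}-F_{k,4n-1}-F_{k,3}+1,
\]
where the boundary value $G_0=F_{k,-1}=1$ is obtained by applying the $k$-Fibonacci recurrence one step backward from $F_{k,1}=kF_{k,0}+F_{k,-1}$.

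To recognise the numerator in the claim, I would then invoke Theorem~2.4 of \cite{Falcon2011} at $n=4$, namely $L_{k,4}=F_{k,3}+F_{k,5}$, to rewrite $-F_{k,3}+1=F_{k,5}-L_{k,4}+1$. Dividing by $L_{k,4}-2=k^2(k^2+4)>0$ and subtracting $n$ yields
\[
\sum_{j=1}^{n}F_{k,2j}L_{k,2j-1}=\frac{F_{k,4n+3}-F_{k,4n-1}+F_{k,5}-1}{L_{k,4}-2}-1-n,
\]
as required. The main obstacle is the product-to-single-term reduction $F_{k,2j}L_{k,2j-1}=F_{k,4j-1}-1$, which is not among the identities explicitly derived in the preliminaries; the Binet-based proof is the most self-contained option. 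Once this reduction is established, the remaining computation is a routine telescoping driven by the $4$-step recurrence with characteristic polynomial $x^2-L_{k,4}x+1$.
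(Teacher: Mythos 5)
Your proposal is correct, and its overall skeleton coincides with the paper's: both reduce each summand to $F_{k,2j}L_{k,2j-1}=F_{k,4j-1}-1$ and then evaluate $\sum_{j=1}^{n}F_{k,4j-1}$. The difference is that where the paper simply cites two external facts --- the product identity $F_{k,m}L_{k,n}=F_{k,m+n}-(-1)^{m}F_{k,n-m}$ and Theorem~4 of \cite{FalconPlaza2009} for the arithmetic-index sum --- you derive both from scratch: the product reduction via Binet forms (the cross terms do collapse to $(\alpha\beta)^{2j-1}(\alpha-\beta)=-(\alpha-\beta)$, giving the $-1$), and the sum via the second-order recurrence $G_{j+1}=L_{k,4}G_j-G_{j-1}$ for $G_j=F_{k,4j-1}$ followed by the telescoping of $(L_{k,4}-2)G_j=(G_{j+1}-G_j)-(G_j-G_{j-1})$. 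I checked the boundary terms: $G_0=F_{k,-1}=1$ and $G_1=F_{k,3}$ give $(L_{k,4}-2)S_n=F_{k,4n+3}-F_{k,4n-1}-F_{k,3}+1$, and the conversion $-F_{k,3}+1=F_{k,5}-L_{k,4}+1$ via $L_{k,4}=F_{k,3}+F_{k,5}$ matches the numerator in the statement exactly, with $L_{k,4}-2=k^2(k^2+4)\neq 0$ justifying the division. Your route buys self-containedness (no reliance on Falc\'on--Plaza's summation theorem, whose hypotheses and indexing one would otherwise have to verify), at the cost of a slightly longer argument; the paper's route is shorter but leans on the cited result, including the somewhat delicate inclusion of the $i=0$ term $F_{k,-1}=1$ when shifting the summation index.
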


\begin{proof}
We recall the identity
\[
F_{k,m}L_{k,n}=F_{k,m+n}-(-1)^mF_{k,n-m}.
\]
Applying it with $m=2j$ and $n=2j-1$, we obtain
\[
F_{k,2j}L_{k,2j-1}
=
F_{k,4j-1}-(-1)^{2j}F_{k,-1}
=
F_{k,4j-1}-F_{k,-1}.
\]
Since $F_{k,-n}=(-1)^{n+1}F_{k,n}$, it follows that $F_{k,-1}=1$, and therefore
\[
F_{k,2j}L_{k,2j-1}=F_{k,4j-1}-1.
\]
Consequently,
\[
\sum_{j=1}^{n}F_{k,2j}L_{k,2j-1}
=
\sum_{j=1}^{n}F_{k,4j-1}-n.
\]
By Theorem~4 in \cite{FalconPlaza2009}, for $m=4$ and $p=-1$, we have
\[
\sum_{i=0}^{n}F_{k,4i-1}
=
\frac{F_{k,4n+3}-F_{k,4n-1}+F_{k,5}-1}{L_{k,4}-2}.
\]
Hence,
\[
\sum_{i=1}^{n}F_{k,4i-1}
=
\frac{F_{k,4n+3}-F_{k,4n-1}+F_{k,5}-1}{L_{k,4}-2}-1,
\]
and the result follows.
\end{proof}

\noindent Another key concept used in this paper is the spectral radius and the energy of a graph.
Let $A$ be an $n \times n$ matrix with real or complex entries and eigenvalues
$\lambda_1,\ldots,\lambda_n$.
The spectral radius $\rho(A)$ of $A$ is defined as
\[
\rho(A)=\max_{1 \leq i \leq n}|\lambda_i|,
\]
that is, the largest absolute value of its eigenvalues.
The spectral radius of a finite graph is defined as the spectral radius of its adjacency matrix,
and the energy of a graph is defined as the sum of the absolute values of the eigenvalues of its adjacency matrix.
This concept is related to the energy of certain classes of molecules in chemistry and was first introduced in mathematics by Gutman in 1978 \cite{Gutman}.
The present work contributes to the study of the spectral theory of matrices associated with integer sequences.

\section{Generic Case: Fibonacci and Lucas Numbers}\label{Generic}

In this section, we study, using combinatorial and elementary algebraic tools, a family of square matrices introduced by Michel Bataille (Rouen, France) in Elementary Problem~B1360 of The Fibonacci Quarterly (see \cite{fibonacci2024}). According to this problem, let $A_n = \left(a_{i,j}\right)$ be the $n \times n$ matrix whose entries are given by
\[
a_{i,i} = F_{2i-1} L_{2i},
\]
and
\[
a_{i,j} = F_{2j} L_{2j-1}, \quad i,j = 1,2,\ldots,n, \quad i \neq j.
\]
Here, $F_n$ and $L_n$ denote the Fibonacci and Lucas numbers, respectively. We provide explicit expressions for several classical properties of this family of matrices, including the determinant, inverse, trace, and matrix powers. Furthermore, we determine the spectral radius and the energy of the graphs associated with these matrices.

\subsection{Determinant}
\noindent Concerning the determinant, Michel Bataille established in Problem~B1360 that the following result holds.
\begin{thm}
Let $A_n=\left(a_{i, j}\right)$ be the $n \times n$ matrix defined above. Then its determinant is given by
\[
\operatorname{det}\left(A_n\right)=\frac{2^{n-1}\left(L_{4 n+1}+9-5 n\right)}{5}.
\]
\end{thm}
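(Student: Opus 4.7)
The plan is to exploit the rank-one structure of $A_n$ that is hidden by the Fibonacci--Lucas identity of Corollary~\ref{aux1}. First I would observe that applying Corollary~\ref{aux1} with $n\mapsto j-1$ yields $F_{2j-1}L_{2j}-F_{2j}L_{2j-1}=2$, i.e.\ the diagonal entry in column $j$ exceeds the common off-diagonal entry $F_{2j}L_{2j-1}$ of that column by exactly $2$. Consequently, after subtracting $2$ from every diagonal entry, every entry of column $j$ of the resulting matrix equals $F_{2j}L_{2j-1}$, and so
\[
A_n \;=\; 2\,I_n + \mathbf{1}\,v^{\top},
\]
where $\mathbf{1}=(1,1,\ldots,1)^{\top}\in\mathbb{R}^n$ and $v=(F_{2}L_{1},F_{4}L_{3},\ldots,F_{2n}L_{2n-1})^{\top}$.

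Next, I would invoke the matrix determinant lemma $\det(\alpha I_n+u v^{\top})=\alpha^{n-1}(\alpha+v^{\top}u)$ with $\alpha=2$, $u=\mathbf{1}$, which immediately gives
\[
\det(A_n)\;=\;2^{n-1}\!\left(2+\sum_{j=1}^{n}F_{2j}L_{2j-1}\right).
\]

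The remaining task is purely algebraic: evaluate this sum and match the claimed closed form. Specializing Theorem~\ref{sumgen} to $k=1$ (so $F_{5}=5$ and $L_{4}-2=5$) produces
\[
\sum_{j=1}^{n}F_{2j}L_{2j-1}\;=\;\frac{F_{4n+3}-F_{4n-1}+4}{5}-1-n,
\]
and the classical Fibonacci--Lucas identity $F_{m+2}-F_{m-2}=L_{m}$ (which follows at once from $L_m=F_{m-1}+F_{m+1}$ and the Fibonacci recurrence) applied with $m=4n+1$ collapses $F_{4n+3}-F_{4n-1}$ to $L_{4n+1}$. Substituting back into the determinant expression gives exactly $\det(A_n)=\dfrac{2^{n-1}(L_{4n+1}+9-5n)}{5}$.

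The only genuinely conceptual step is the first one: spotting that Corollary~\ref{aux1} is precisely the identity that rewrites $A_n$ as a rank-one perturbation of $2 I_n$. Once this is seen, the determinant is immediate from the matrix determinant lemma and the remainder of the argument reduces to algebraic bookkeeping using Theorem~\ref{sumgen} and the identity $F_{m+2}-F_{m-2}=L_m$; I therefore do not anticipate any further obstacle.
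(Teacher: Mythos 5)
Your proof is correct, but it takes a genuinely different route from the one the paper gives for this statement. The paper's proof subtracts the first row from every other row, obtains a reduced matrix $\bar{A}_n$ whose lower block is essentially triangular, and then runs an induction on $n$ via cofactor expansion along the last row, closing the inductive step with a chain of Lucas identities ($5F_k=L_{k-1}+L_{k+1}$, $L_k^2=L_{2k}+2(-1)^k$, $L_{n-1}L_{n+1}=5(-1)^{n-1}+L_n^2$). You instead read Corollary~\ref{aux1} as saying that $A_n$ is the rank-one perturbation $2I_n+\mathbf{1}v^{\top}$ and apply the matrix determinant lemma, which reduces everything to evaluating $\sum_{j=1}^{n}F_{2j}L_{2j-1}$ via the $k=1$ case of Theorem~\ref{sumgen} and the identity $F_{m+2}-F_{m-2}=L_m$; I checked the arithmetic (e.g.\ $n=2$ gives $2(2+1+12)=30$) and it matches the claimed closed form. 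What your approach buys is brevity and the elimination of the induction; it is in fact exactly the strategy the paper itself adopts later for the general $k$-Fibonacci family in Theorem~\ref{detgen}, so your argument is essentially the $k=1$ specialization of that more general proof, whereas the paper's Section~3 proof is the more elementary (and more laborious) hand computation. The only mild caveat is that your argument leans on Theorem~\ref{sumgen}, which appears in the preliminaries and is proved independently, so there is no circularity; everything is in order.
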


\begin{proof}
First, we simplify the matrix $A_n$ using elementary row operations. We add $-1$ times the first row $r_1$ of $A_n$ to each of the rows $r_2, r_3, \ldots, r_n$.

As established by the first result in \cite{Ferns}, the following identity holds:
\[
F_{x_1}L_{x_2} = F_{x_1 + x_2} + (-1)^{x_2} F_{x_1 - x_2}.
\]
Applying this formula with $x_1 = 2n$ and $x_2 = 2n - 1$, we obtain:
\[
F_{2n}L_{2n-1} = F_{4n - 1} - F_1.
\]
Similarly, taking $x_1 = 2n - 1$ and $x_2 = 2n$, we have:
\[
F_{2n - 1}L_{2n} = F_{4n - 1} + F_1.
\]
Subtracting these two expressions yields:
\[
F_{2n - 1}L_{2n} - F_{2n}L_{2n - 1} = 2F_1 = 2,
\]
and thus
\[
F_{2n}L_{2n - 1} - F_{2n - 1}L_{2n} = -2.
\]

Therefore, the reduced matrix $\bar{A}_n$ is an $n \times n$ matrix with entries defined by
\[
\bar{A}_n = (a_{ij}) =
\begin{cases}
F_1 L_2 & \text{if } i = j = 1, \\
F_{2j} L_{2j - 1} & \text{if } i = 1 \text{ and } 2 \leq j \leq n, \\
-2 & \text{if } 2 \leq i \leq n \text{ and } j = 1, \\
2 & \text{if } i = j \text{ and } 2 \leq i \leq n, \\
0 & \text{otherwise}.
\end{cases}
\]

According to Theorem~4.6 in \cite{Friedberg}, we have $\det(A_n) = \det(\bar{A}_n)$. Therefore, it suffices to prove that
\[
\det(\bar{A}_n) = \frac{2^{n - 1}(L_{4n + 1} + 9 - 5n)}{5}.
\]

We proceed by mathematical induction on $n$.
\par\bigskip

For $n = 2$, we compute
\[
\det(\bar{A}_2) = \det \begin{pmatrix}
F_1 L_2 & F_4 L_3 \\
-2 & 2
\end{pmatrix}
= \det \begin{pmatrix}
3 & 12 \\
-2 & 2
\end{pmatrix} = 30.
\]
On the other hand,
\[
\frac{2^{2 - 1}(L_9 + 9 - 10)}{5} = \frac{2(76 - 1)}{5} = \frac{2 \cdot 75}{5} = 30.
\]
Thus, the base case holds.

Now, assume that the identity holds for some fixed $n \in \mathbb{N}$, that is,
\[
\det(\bar{A}_n) = \frac{2^{n - 1}(L_{4n + 1} + 9 - 5n)}{5}.
\]
We aim to show that
\[
\det(\bar{A}_{n+1}) = \frac{2^n(L_{4n + 5} + 4 - 5n)}{5}.
\]

Since $\det(\bar{A}_{n+1})$ can be computed by cofactor expansion along any row, we expand along the last row:
\begin{align*}
\det(\bar{A}_{n+1}) &= 2^n(F_{2n+2}L_{2n+1}) + 2\det(\bar{A}_n) \\
&= 2^n\left(F_{2n+2} L_{2n+1} + \frac{L_{4n+1} + 9 - 5n}{5}\right) \\
&= \frac{2^n \left(5 F_{2n+2} L_{2n+1} + L_{4n+1} + 9 - 5n\right)}{5}.
\end{align*}

Using the identity $5F_k = L_{k - 1} + L_{k + 1}$ for $k = 2n+2$ (see \cite{koshy}, p.~90, Corollary~5.5), we obtain
\[
5F_{2n+2} = L_{2n+1} + L_{2n+3}.
\]
Thus,
\[
5 F_{2n+2} L_{2n+1} = L_{2n+1}^2 + L_{2n+1} L_{2n+3}.
\]
Substituting, we have
\[
\det(\bar{A}_{n+1}) = \frac{2^n}{5} \left(L_{2n+1}^2 + L_{2n+1} L_{2n+3} + L_{4n+1} + 9 - 5n\right).
\]

According to \cite{koshy}, p.~97, Exercise~41, the identity $L_k^2 = L_{2k} + 2(-1)^k$ holds. In particular, for $k = 2n + 1$,
\[
L_{2n+1}^2 = L_{4n+2} - 2.
\]
Substituting again, we obtain
\[
\det(\bar{A}_{n+1}) = \frac{2^n}{5} \left(L_{4n+2} - 2 + L_{2n+1} L_{2n+3} + L_{4n+1} + 9 - 5n\right).
\]

By the recursive definition of the Lucas numbers, this simplifies to
\[
\det(\bar{A}_{n+1}) = \frac{2^n}{5} \left(L_{4n+3} + L_{2n+1} L_{2n+3} + 7 - 5n\right).
\]

Next, using the identity $L_{n-1}L_{n+1} = 5(-1)^{n-1} + L_n^2$ for $n = 2n + 2$ (see \cite{koshy}, p.~97, Exercise~38), we find that
\[
L_{2n+1} L_{2n+3} = -5 + L_{2n+2}^2,
\]
and hence
\[
\det(\bar{A}_{n+1}) = \frac{2^n}{5} \left(L_{4n+3} + L_{2n+2}^2 + 2 - 5n\right).
\]

Using again the identity $L_k^2 = L_{2k} + 2(-1)^k$ with $k = 2n+2$, we obtain
\[
\det(\bar{A}_{n+1}) = \frac{2^n}{5} \left(L_{4n+3} + L_{4n+4} + 4 - 5n\right).
\]

Finally, applying the Lucas recurrence relation
\[
L_{4n+3} + L_{4n+4} = L_{4n+5},
\]
we conclude that
\[
\det(\bar{A}_{n+1}) = \frac{2^n (L_{4n+5} + 4 - 5n)}{5}.
\]

This completes the inductive step and, therefore, the proof.
\end{proof}

\begin{rem}
The integer sequence $3, 30, 412, 5696, 78272, \ldots$ arising from the determinant of $A_n$ does not appear in the On-Line Encyclopedia of Integer Sequences (OEIS).
\end{rem}

\subsection{Trace}
For the trace, the following result holds.
\begin{thm}\label{tr}
The trace of the matrix $A_n$, denoted by $\mathrm{Tr}(A_n)$, is given by
\[
\mathrm{Tr}(A_n)=\frac{F_{4n+3}-F_{4n-1}+4}{5}+n-1.
\]
\end{thm}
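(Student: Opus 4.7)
The plan is to unfold the trace into a sum of diagonal entries and then reduce everything to the sum $\sum_{j=1}^{n}F_{2j}L_{2j-1}$, for which we already have a closed form from Theorem~\ref{sumgen}. By definition of $A_n$, the diagonal entries are $a_{i,i}=F_{2i-1}L_{2i}$, so
\[
\mathrm{Tr}(A_n)=\sum_{i=1}^{n}F_{2i-1}L_{2i}.
\]

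The first substantive step is to swap each term $F_{2i-1}L_{2i}$ for the ``mirrored'' product $F_{2i}L_{2i-1}$, because the latter is what Theorem~\ref{sumgen} knows how to sum. Corollary~\ref{aux1} (the $k=1$ instance of Theorem~\ref{restagen}) gives exactly the required bridge:
\[
F_{2i-1}L_{2i}-F_{2i}L_{2i-1}=2,
\]
so term by term $F_{2i-1}L_{2i}=F_{2i}L_{2i-1}+2$. Summing from $i=1$ to $n$ yields
\[
\mathrm{Tr}(A_n)=\sum_{i=1}^{n}F_{2i}L_{2i-1}+2n.
\]

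Next, I invoke Theorem~\ref{sumgen} with $k=1$. Since $F_{1,5}=F_5=5$ and $L_{1,4}=L_4=7$, the formula specializes to
\[
\sum_{j=1}^{n}F_{2j}L_{2j-1}=\frac{F_{4n+3}-F_{4n-1}+4}{5}-1-n.
\]
Substituting this into the previous display and combining the additive constants ($-1-n+2n=n-1$) produces
\[
\mathrm{Tr}(A_n)=\frac{F_{4n+3}-F_{4n-1}+4}{5}+n-1,
\]
which is the claimed identity.

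No serious obstacle is expected: all the nontrivial work has already been packaged into Corollary~\ref{aux1} and Theorem~\ref{sumgen}. The only point requiring mild care is the specialization of Theorem~\ref{sumgen} at $k=1$, making sure that $F_{k,5}-1$ and $L_{k,4}-2$ collapse to $4$ and $5$ respectively so that the numerator and denominator in the final expression match those in the target formula.
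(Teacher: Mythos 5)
Your proof is correct, and it reaches the stated formula by a route that differs from the paper's in which auxiliary results it leans on. The paper applies the Ferns product identity $F_mL_n=F_{m+n}+(-1)^nF_{m-n}$ directly to each diagonal entry, obtaining $F_{2i-1}L_{2i}=F_{4i-1}+F_{-1}=F_{4i-1}+1$, and then sums $\sum_{i=1}^nF_{4i-1}$ via Koshy's Theorem~5.11. You instead first trade each diagonal term for its mirrored product via Corollary~\ref{aux1}, writing $F_{2i-1}L_{2i}=F_{2i}L_{2i-1}+2$, and then invoke the general-$k$ summation formula of Theorem~\ref{sumgen} specialized at $k=1$ (where $F_{k,5}-1=4$ and $L_{k,4}-2=5$). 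Since Theorem~\ref{sumgen} is itself proved from the same Ferns-type identity plus the Falc\'on--Plaza summation theorem, the two arguments rest on the same underlying identities; the difference is organizational. Your version has the advantage of reusing machinery the paper has already established (and it transparently generalizes: the identical argument proves Theorem~\ref{tracegen} for arbitrary $k$), at the cost of a detour through the mirrored products that the paper's direct computation avoids. One small point worth making explicit is the reindexing of Corollary~\ref{aux1}: you use it in the form $F_{2i-1}L_{2i}-F_{2i}L_{2i-1}=2$, which corresponds to taking $n=i-1\geq 0$ in the corollary as stated; this is harmless since the underlying Simson identity covers that range, but a one-line remark would make the step airtight.
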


\begin{proof}
According to the definition of the trace, it is given by $\displaystyle\sum_{i=1}^{n} F_{2i-1} L_{2i}$. 
Applying the identity $F_m L_n = F_{m+n} + (-1)^n F_{m-n}$ with $m = 2i - 1$ and $n = 2i$, we obtain
$F_{2i-1} L_{2i} = F_{4i-1} + F_{-1}$ for each summand in the trace.
Recalling that $F_{-1} = F_1$, the expression can be simplified as:

\begin{align*}\label{tr}
    \displaystyle\sum_{i=1}^{n}F_{2i-1}L_{2i}=\displaystyle\sum_{i=1}^{n}F_{4n-1}+n\cdot F_1
\end{align*}

\noindent Now, using Theorem~5.11 of \cite{koshy} for $k = 4$ and $j = -1$, we have
\[
\sum_{i=1}^{n} F_{4i-1}
= \frac{F_{4n+3} - F_{4n-1} + 4}{5} - 1.
\]
Thus, the trace is given by
\begin{align*}
\sum_{i=1}^{n} F_{2i-1} L_{2i}
&= \frac{F_{4n+3} - F_{4n-1} + 4}{5} + n - 1 \\
&= \frac{F_{4n+3} - F_{4n-1} + 5n - 1}{5}.
\end{align*}
\end{proof}

\begin{rem}
The integer sequence $3,17,107,718,4900,33558,\ldots$ arising from the trace of $A_n$ is not currently listed in the On-Line Encyclopedia of Integer Sequences (OEIS).
\end{rem}

\subsection{Eigenvalues, Spectrum and some Consequences}
Regarding the characteristic polynomial, we establish the following result:

\begin{lem}\label{charp}
The characteristic polynomial $p(\lambda)$ associated with $A_n$ is given by
\[
p(\lambda)=-(2-\lambda)^{n-1}\left(\lambda-\displaystyle\sum_{i=1}^{n-1}F_{2i}L_{2i-1}+F_{2n-1}L_{2n}\right).
\]
\end{lem}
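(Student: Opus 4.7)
The plan is to exhibit $A_n$ as a rank-one perturbation of a scalar multiple of the identity matrix, which reduces the computation of the characteristic polynomial to a one-line application of the matrix determinant lemma.

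The first step is a structural observation that, by Corollary~\ref{aux1}, $F_{2i-1}L_{2i}-F_{2i}L_{2i-1}=2$ for every $i\ge 1$. Hence every diagonal entry of $A_n$ exceeds the common value $F_{2j}L_{2j-1}$ shared by all of the other entries of its column by exactly $2$, which yields the decomposition
\[
A_n = 2I + \mathbf{1}\,v^{\top}, \qquad v=(F_{2}L_{1},\,F_{4}L_{3},\,\ldots,\,F_{2n}L_{2n-1})^{\top},
\]
where $\mathbf{1}$ denotes the all-ones column vector of length $n$.

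The second step consists in computing $\det(A_n-\lambda I)=\det\bigl((2-\lambda)I+\mathbf{1}v^{\top}\bigr)$. Invoking the matrix determinant lemma---or, equivalently, the polynomial identity $\det(\alpha I+uw^{\top})=\alpha^{n-1}(\alpha+w^{\top}u)$ for $u,w\in\mathbb{R}^{n}$---one obtains at once
\[
\det(A_n-\lambda I)=(2-\lambda)^{n-1}\Bigl[(2-\lambda)+\sum_{j=1}^{n}F_{2j}L_{2j-1}\Bigr].
\]
The third step is then to reshape the sum inside the bracket to match the form stated in the lemma. By Theorem~\ref{aux2},
\[
\sum_{j=1}^{n}F_{2j}L_{2j-1}=\sum_{i=1}^{n-1}F_{2i}L_{2i-1}+F_{2n-1}L_{2n}-2,
\]
so the $-2$ on the right cancels the $+2$ inside the bracket. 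What remains is a linear expression in $\lambda$, and extracting an overall minus sign produces the announced factorisation.

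I do not anticipate any genuine obstacle in this argument. All of the non-trivial arithmetic has been packaged into the two auxiliary identities Corollary~\ref{aux1} and Theorem~\ref{aux2}, and the matrix determinant lemma is standard. The only point requiring care is the final sign bookkeeping, where one must verify that after cancellation the bracket collapses exactly into the linear factor displayed in the lemma.
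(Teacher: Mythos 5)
Your argument is correct, but it takes a genuinely different route from the paper's proof of this lemma. The paper proves Lemma~\ref{charp} by induction on $n$: it expands $\det(A_{n+1}-\lambda I_{n+1})$ by cofactors along the last row, reduces each minor $M_k$ by row operations, and then invokes Corollary~\ref{aux1} and Theorem~\ref{aux2} to collapse the resulting quadratic into $(2-\lambda)\bigl(\lambda-\cdots\bigr)$, a computation spanning several pages. You instead exhibit the rank-one structure $A_n=2I+\mathbf{1}v^{\top}$ (justified by Corollary~\ref{aux1}) and apply the matrix determinant lemma, obtaining the factorisation in one step; the only point needing care is that $\det(\alpha I+uw^{\top})=\alpha^{n-1}(\alpha+w^{\top}u)$ is first established for $\alpha\neq 0$ and then extended to all $\alpha$ because both sides are polynomials in $\alpha$, which your phrase ``polynomial identity'' covers. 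Notably, your decomposition is precisely the one the paper itself adopts later for the generalized matrices $A_{k,n}$ (Theorem~\ref{detgen} and Theorem~\ref{eigegen}), so your proof is in effect the specialisation of the Section~\ref{General} method to $k=1$, and it is considerably shorter and more transparent than the inductive argument. One caveat on the final sign bookkeeping: your computation yields the factor $\lambda-\bigl(\sum_{i=1}^{n-1}F_{2i}L_{2i-1}+F_{2n-1}L_{2n}\bigr)$, with $F_{2n-1}L_{2n}$ \emph{subtracted}. This agrees with the paper's intended reading of the statement (its own base case evaluates $F_{2}L_{1}+F_{3}L_{4}=15$ and identifies $15$ as the nonrepeated root), but not with a literal parsing of the displayed formula, where $+F_{2n-1}L_{2n}$ sits outside the scope of the minus sign attached to the sum; your derivation thus also confirms that the statement should be read (or rewritten) as $\lambda-\sum_{i=1}^{n-1}F_{2i}L_{2i-1}-F_{2n-1}L_{2n}$.
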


\begin{proof} We proceed by induction: \\
For $n=2$ we have: 
\begin{align*}
\left|A_n-\lambda I_n\right|&=\left|\left(\begin{array}{ll}
3 & 12 \\
1 & 14
\end{array}\right)-\left(\begin{array}{ll}
\lambda & 0 \\
0 & \lambda
\end{array}\right)\right| \\
&=\left|\left(\begin{array}{cc}
3-\lambda & 12 \\
1 & 14-\lambda
\end{array}\right)\right|\\
& =(3-\lambda)(14-\lambda)-12 =42-3 \lambda-14 \lambda+\lambda^2-12 \\
& =\lambda^2-17 \lambda+30  =(\lambda-2)(\lambda-15) =-(2-\lambda)(\lambda-15)
\end{align*}
On the other hand note that $F_{2}L_{1}+F_{3}L_{4}=1+14=15$. Thus, the base case holds.    

\noindent Now, we assume that the identity holds for some fixed $n\in \mathbb{N}$, we mean that, 

$$
\operatorname{det}\left(A_n-\lambda I_n\right) =-(2-\lambda)^{n-1}\left(\lambda-\sum_{i=1}^{n-1}F_{2 i}L_{2i-1}+F_{2n-1}L_{2 n}\right)
$$
\noindent We want to show that: 
$$
\operatorname{det}\left(A_{n+1}-\lambda I_{n+1}\right)=-(2-\lambda)^n\left(\lambda-\sum_{i=1}^nF_{2i} L_{2 i-1}+F_{2 n+1} L_{2n+2}\right)
$$

\noindent We start by computing $\operatorname{det}\left(B\right)$, where $B=A_{n+1}-\lambda I_{n+1}$, so, we want to compute  

$$\left|\begin{array}{ccccccc}
F_1 L_2-\lambda & F_4 L_3 & F_6 L_5 & F_8 L_7 & \cdots & F_{2 n} L_{2 n-1} & F_{2 n+2} L_{2 n+1} \\
F_2 L_1 & F_3 L_4-\lambda & F_6 L_5 & F_8 L_7 & \cdots & F_{2 n} L_{2 n-1} & F_{2 n+2} L_{2 n+1} \\
F_2 L_1 & F_4 L_3 & F_5L_6-\lambda & F_8 L_7 & \cdots & F_{2 n} L_{2 n-1} & F_{2 n+2} L_{2 n+1} \\
F_2 L_1 & F_4 L_3 & F_6 L_5 & F_7 L_8-\lambda & \cdots & F_{2 n} L_{2 n-1} & F_{2 n+2} L_{2 n+1} \\
\vdots & \vdots & \vdots & \vdots & \ddots & \vdots & \vdots \\
F_2 L_1 & F_4 L_3 & F_6 L_5 & F_8 L_7 & \cdots & F_{2 n-1} L_n-\lambda & F_{2 n+2} L_{2 n+1} \\
F_2 L_1 & F_4 L_3 & F_6 L_5 & F_8 L_7 & \cdots & F_{2 n} L_{2 n-1} & F_{2 n+1} L_{2 n+2}-\lambda
\end{array}\right|$$

\noindent Since $\operatorname{det}\left(A_{n+1}-\lambda I_{n+1}\right)$ can be calculated by expanding by cofactors using any row of $B$ we compute $\operatorname{det}\left(A_{n+1}-\lambda I_{n+1}\right)$ with the last row: 
\begin{align*}
\operatorname{det}(B)
&= \operatorname{det}(A_{n+1}-\lambda I_{n+1}) \\
&= (-1)^{n+2}F_2L_1|M_1|
 + (-1)^{n+3}F_{4}L_{3}|M_2| + \cdots \\
&\quad + (-1)^{2n+1}F_{2n}L_{2n-1}|M_n| \\
&\quad + (-1)^{2n+2}\bigl(F_{2n+1}L_{2n+2}-\lambda\bigr)|M_{n+1}|.
\end{align*}

\noindent where $M_k$ is an $n \times n$ matrix obtained from the matrix $B$ by deleting the $k$-th column and the $(n+1)$-st row, and which can be explicitly described as
$$
M_k=(m_{i,j})=\left\{\begin{array}{lll}
b_{i,j} & \text { if } & j<k \\
b_{i,j+1} & \text {if} & k \leq j \leq n
\end{array}\right.
$$

\noindent when $1\leq k\leq n$ where $b_{i,j}$ denoted the corresponding entry of the matrix $B$ and $1\leq i \leq n$ and $M_{n+1}$ is equal to $A_{n}-\lambda I_n$.

\noindent The above shows that, in order to compute $\operatorname{det}\left(A_{n+1}-\lambda I_{n+1}\right)$, it suffices to determine the value of $M_k$ for $1 \leq k \leq n$. To this end, we consider the matrix $\overline{M}_k$, which is obtained from $M_k$ by applying the admissible row transformations consisting of adding $-1$ times the row $r_k$ to every row $r_t$, for all $t \neq k$. These admissible transformations allow us to describe $\overline{M}_k$ explicitly as follows:

$$
\overline{M_k}=(b_{i,j})= \begin{cases}b_{ij} & \text { si } i=k \\ 
2-\lambda & \text { si } i=j  \text{ and } i<k \\ 
2-\lambda & \text { si } j=i-1 \text { and } i>k \\ 
0 & \text { otherwise }\end{cases}
$$
Thus,
\begin{align*}
\operatorname{det}(B)
&= \operatorname{det}(A_{n+1}-\lambda I_{n+1}) \\
&= (-1)^{n+2}F_2L_1|\overline{M}_1|
 + (-1)^{n+3}F_{4}L_{3}|\overline{M}_2| + \cdots \\
&\quad + (-1)^{2n+1}F_{2n}L_{2n-1}|\overline{M}_n| \\
&\quad + (-1)^{2n+2}\bigl(F_{2n+1}L_{2n+2}-\lambda\bigr)|A_n-\lambda I_n|.
\end{align*}

\noindent To compute $|\overline{M}_k|$ we again expand by cofactors using the row $k$ of the corresponding $\overline{M}_k$, that is, $|\overline{M}_k|=\displaystyle \sum_{j=1}^{n}b_{kj}|P_j|$ where $P_j$ is a matrix of size $(n-1)\times(n-1)$ with the following form
\[
P_j = (P_{s,t})=
\begin{cases}
2 - \lambda, & \text{if } s = t \text{ and } s < j, \\
2 - \lambda, & \text{if } t = s - 1 \text{ and } s > j, \\
0,           & \text{otherwise}.
\end{cases}
\]

\noindent Note that the column $n-1$ is always $0$ for all $P_j$ with $1\leq j \leq n-1$. By properties of determinants $|P_j|=0$ for $1\leq j \leq n-1$. Thus, $|\overline{M}_k|=(-1)^{n+k}b_{k,n}|P_n|$, but $P_n=I_{n-1}(2-\lambda)$ which implies that: 
\[|\overline{M_{k}}|=(-1)^{n+k}F_{2n+2}L_{2n+1}(2-\lambda)^{n-1}\]
So, applying the induction hypothesis we get: 

\begin{align}
\operatorname{det}(B)
&= \operatorname{det}(A_{n+1}-\lambda I_{n+1}) \notag \\
&= (-1)^{n+2}F_{2}L_{1}\cdot(-1)^{n+1}F_{2n+2}L_{2n+1}(2-\lambda)^{n-1} \notag \\
&\quad + (-1)^{n+3}F_{4}L_{3}\cdot(-1)^{n+2}F_{2n+2}L_{2n+1}(2-\lambda)^{n-1} \notag \\
&\quad + \cdots \notag \\
&\quad + (-1)^{2n+1}F_{2n}L_{2n-1}\cdot(-1)^{2n}F_{2n+2}L_{2n+1}(2-\lambda)^{n-1} \notag \\
&\quad + (-1)^{2n+2}\bigl(F_{2n+1}L_{2n+2}-\lambda\bigr) \cdot -(2-\lambda)^{n-1}
\left(\lambda-\sum_{i=1}^{n-1}F_{2i}L_{2i-1}+F_{2n-1}L_{2n}\right)\notag
\end{align}

\noindent which can be simplify and factorize by 
\[
-(2-\lambda)^{n-1}\Bigl(
F_{2n+2}L_{2n+1}\Bigl( \sum_{i=1}^{n} F_{2i}L_{2i-1} \Bigr)
+\bigl(F_{2n+1}L_{2n+2}-\lambda\bigr)
\bigl(\lambda-\sum_{i=1}^{n-1}F_{2i}L_{2i-1}+F_{2n-1}L_{2n}\bigr)
\Bigr).
\]

\noindent Simplifying and reordering 

\[
\begin{aligned}
-(2-\lambda)^{n-1}\Bigl(
&-\lambda^{2}
+ \lambda\Bigl(
F_{2n+1}L_{2n+2}
+ \sum_{i=1}^{n-1} F_{2i}L_{2i-1}
+ F_{2n-1}L_{2n}
\Bigr) \\
&- F_{2n+1}L_{2n+2}
\Bigl(
\sum_{i=1}^{n-1} F_{2i}L_{2i-1}
+ F_{2n-1}L_{2n}
\Bigr) \\
&+ F_{2n+2}L_{2n+1}
\Bigl(
\sum_{i=1}^{n} F_{2i}L_{2i-1}
\Bigr)
\Bigr).
\end{aligned}
\]

\noindent Now, we use Corollary \ref{aux1} and Theorem \ref{aux2} to simplify the last expression: 

\begin{align*}
\operatorname{det}(B)&=-(2-\lambda)^{n-1} \Bigl( -\lambda^2+\lambda\left(F_{2 n+1} L_{2 n+2}+\sum_{i=1}^{n-1} F_{2 i} L_{2 i-1}+F_{2 n-1} L_{2 n}\right)\\
&-F_{2 n+1} L_{2 n+2} \cdot\left(\sum_{i=1}^{n-1} F_{2 i} L_{2 i-1}+F_{2 n-1} \cdot L_{2 n}\right)\\ 
&+F_{2 n+2} L_{2 n+1}\left(\sum_{i=1}^n F_{2 i} L_{2 i-1}\right)\Bigr)\\
&= -(2-\lambda)^{n-1} \Bigl(-\lambda^2+\lambda\left(F_{2 n+1} L_{2 n+2}+\sum_{i=1}^{n-1} F_{2i} L_{2 i-1}+F_{2 n-1} L_{2 n}\right)\\
&-\Bigl(\left(2+F_{2 n+2} L_{2 n+1}\right) \cdot\left(2+\sum_{i=1}^n F_{2 i} L_{2 i-1}\right)\Bigr) \\
&  +F_{2 n+2} L_{2 n+1} \cdot\left(\sum_{i=1}^n F_{2 i} L_{2 i-1}\right)\Bigr)
\end{align*}
\normalcolor
The second factor is equivalent to 

\begin{align*}
&=-\lambda^2+\lambda(F_{2n+1}L_{2n+2}+\sum_{i=1}^{n-1}F_{2i}L_{2i-1}+F_{2n-1}L_{2n})\\
&-\Bigl(2\cdot2+2\sum_{i=1}^{n}F_{2i}L_{2i-1}+2F_{2n+2}L_{2n+1}+F_{2n+2}L_{2n+1}\cdot\sum_{i=1}^{n}F_{2i}L_{2i-1}\Bigr)\\
&+F_{2n+2}L_{2n+1}\Bigl(\sum_{i=1}^{n}F_{2i}L_{2i-1}\Bigr)
\end{align*}
\begin{align*}
&=-\lambda^2+\lambda(F_{2n+1}L_{2n+2}+\sum_{i=1}^{n-1}F_{2i}L_{2i-1}+F_{2n-1}L_{2n})\\
&-2\cdot2-2\sum_{i=1}^{n}F_{2i}L_{2i-1}-2F_{2n+2}L_{2n+1}-F_{2n+2}L_{2n+1}\cdot\sum_{i=1}^{n}F_{2i}L_{2i-1}\\
&+F_{2n+2}L_{2n+1}\Bigl(\sum_{i=1}^{n}F_{2i}L_{2i-1}\Bigr)
\end{align*}
\begin{align*}
&=-\lambda^2+\lambda(F_{2n+1}L_{2n+2}+\sum_{i=1}^{n-1}F_{2i}L_{2i-1}+F_{2n-1}L_{2n})\\
&-2\Bigl( 2+\sum_{i=1}^{n}F_{2i}L_{2i-1}+F_{2n+2}L_{2n+1}\Bigr)
\end{align*}
\begin{align*}
&=-\lambda^2+\lambda(F_{2n+1}L_{2n+2}+\sum_{i=1}^{n-1}F_{2i}L_{2i-1}+F_{2n-1}L_{2n})\\
&-2\Bigl( \sum_{i=1}^{n}F_{2i}L_{2i-1}+F_{2n+1}L_{2n+2}\Bigr)
\end{align*}
\noindent Note that by Theorem \ref{aux2} 

\[\sum_{i=1}^{n-1}F_{2i}L_{2i-1}+F_{2n-1}L_{2n}+F_{2n+1}L_{2n+2}=2+\sum_{i=1}^{n}F_{2i}L_{2i-1}+F_{2n+1}L_{2n+2}\]

\noindent So the expression becomes

\[-\lambda^2+\lambda(2+\sum_{i=1}^{n}F_{2i}L_{2i-1}+F_{2n+1}L_{2n+2})\\
-2\Bigl( \sum_{i=1}^{n}F_{2i}L_{2i-1}+F_{2n+1}L_{2n+2}\Bigr)\]
\noindent It can be factor as

\[(2-\lambda)\Bigl( \lambda - \sum_{i=1}^{n} F_{2i}L_{2i-1}+F_{2n+1}L_{2n+2} \Bigr)\]

\noindent Thus 
\[\operatorname{det}(A_{n+1}-\lambda I_{n+1})=-(2-\lambda)^{n-1}\cdot(2-\lambda)\cdot\Bigl( \lambda - \sum_{i=1}^{n} F_{2i}L_{2i-1}+F_{2n+1}L_{2n+2} \Bigr)\]
So, 
\[\operatorname{det}(A_{n+1}-\lambda I_{n+1})=-(2-\lambda)^{n}\cdot\Bigl( \lambda - \sum_{i=1}^{n} F_{2i}L_{2i-1}+F_{2n+1}L_{2n+2} \Bigr)\]

\noindent This completes the inductive step, and thus the proof.
\end{proof}

\begin{thm}\label{eigen}
The matrix $A_n$ has exactly two distinct eigenvalues:
\[
\frac{F_{4n + 3} - F_{4n - 1} - 5n + 9}{5} \quad \text{(with multiplicity 1)} \quad \text{and} \quad
2 \quad \text{(with multiplicity } n - 1\text{)}.
\]
\end{thm}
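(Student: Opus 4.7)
The plan is to read the eigenvalues directly off the characteristic polynomial obtained in Lemma~\ref{charp}, and then use Theorems~\ref{aux2} and~\ref{sumgen} to rewrite the nontrivial root in the stated closed form. Since Lemma~\ref{charp} gives
\[
p(\lambda)=-(2-\lambda)^{n-1}\Bigl(\lambda-\sum_{i=1}^{n-1}F_{2i}L_{2i-1}-F_{2n-1}L_{2n}\Bigr),
\]
the equation $p(\lambda)=0$ immediately produces the factor $(2-\lambda)^{n-1}$, yielding the eigenvalue $\lambda=2$ with algebraic multiplicity $n-1$, together with the single remaining root
\[
\lambda_{0}=\sum_{i=1}^{n-1}F_{2i}L_{2i-1}+F_{2n-1}L_{2n},
\]
of multiplicity one. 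Thus the first half of the statement, concerning the multiplicities, requires no further work beyond quoting Lemma~\ref{charp}.

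The remaining task is to identify $\lambda_{0}$ with $\dfrac{F_{4n+3}-F_{4n-1}-5n+9}{5}$. The first step is to apply Theorem~\ref{aux2}, which rewrites $\lambda_0$ as
\[
\lambda_{0}=2+\sum_{i=1}^{n}F_{2i}L_{2i-1},
\]
converting the mixed expression into a single sum that matches the left-hand side of Theorem~\ref{sumgen}. I would then specialize Theorem~\ref{sumgen} to $k=1$, noting that $L_{4}-2=5$ and $F_{5}-1=4$, which yields
\[
\sum_{i=1}^{n}F_{2i}L_{2i-1}=\frac{F_{4n+3}-F_{4n-1}+4}{5}-1-n.
\]
Substituting into the previous display and clearing denominators gives exactly $\lambda_0=\frac{F_{4n+3}-F_{4n-1}-5n+9}{5}$.

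Finally, I would verify that the two eigenvalues are genuinely distinct, so that the count $1+(n-1)=n$ is correct. For $n\geq 2$ this is clear, since $\lambda_{0}$ already contains the strictly positive diagonal contribution $F_{2n-1}L_{2n}\geq F_{3}L_{4}=14>2$, so $\lambda_{0}>2$; for $n=1$ the matrix is $1\times 1$ and the claim is trivial. No genuine obstacle is expected: the whole argument is a substitution game built on previously established identities, and the only delicate point is bookkeeping of the sign in Lemma~\ref{charp} so that the two applications (Theorem~\ref{aux2} followed by Theorem~\ref{sumgen}) line up correctly with the closed-form expression.
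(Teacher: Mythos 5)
Your proposal is correct and follows essentially the same route as the paper: both read the eigenvalues and their multiplicities directly off the characteristic polynomial of Lemma~\ref{charp} and then reduce the nontrivial root to the stated closed form via Fibonacci--Lucas identities (the paper applies the Ferns product identity termwise and then Koshy's sum formula, while you route the same computation through Theorem~\ref{aux2} and the $k=1$ specialization of Theorem~\ref{sumgen}, which amounts to the same arithmetic). Your explicit verification that $\lambda_{0}>2$, so that the two eigenvalues are genuinely distinct, is a small worthwhile addition that the paper's proof omits.
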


\begin{proof}
According to Theorem \ref{charp}, the characteristic polynomial is given by
\[
p(\lambda)=-(2-\lambda)^{n-1}\left(\lambda-\displaystyle\sum_{i=1}^{n-1}F_{2i}L_{2i-1}+F_{2n-1}L_{2n}\right).
\]
If we equate the characteristic polynomial to zero, we obtain
\[
p(\lambda)=-(2-\lambda)^{n-1}\left(\lambda-\displaystyle\sum_{i=1}^{n-1}F_{2i}L_{2i-1}+F_{2n-1}L_{2n}\right)=0.
\]

\noindent This implies that $A_n$ has two eigenvalues: $\lambda_{1}=2$ with multiplicity $n-1$, and
\[
\lambda_{2}=\displaystyle\sum_{i=1}^{n} F_{2i}L_{2i-1}+F_{2n-1}L_{2n}.
\]
We note that $\lambda_2$ can be reduced by using identities involving Fibonacci numbers. As in Theorem \ref{tr}, we use the identity $F_{m}L_{n}=F_{m+n}+(-1)^nF_{m-n}$ with $m=2i$ and $n=2i-1$ for each term in the sum. This implies that $F_{2n-2}L_{2n-3}=F_{4n-5}-F_{1}$, and therefore the sum can be written as
\[
\displaystyle\sum_{i=1}^{n} F_{2i}L_{2i-1} =\sum_{i=1}^{n-1}F_{4i-1}-(n-1)F_{1}.
\]
Similarly, for the second term in $\lambda_{2}$, using $F_{m}L_{n}=F_{m+n}+(-1)^nF_{m-n}$ with $m=2n-1$ and $n=2n$, we obtain
\[
F_{2n-1}L_{2n}=F_{4n-1}+F_{1}.
\]
Thus, the expression can be written as
\begin{align*}
    \lambda_2&=\displaystyle\sum_{i=1}^{n} F_{2i}L_{2i-1}+F_{2n-1}L_{2n}\\
    &= \displaystyle \sum_{i=1}^{n-1} F_{4i-1}-(n-1)F_1+F_{4n-1}+F_1\\
    &=\displaystyle \sum_{i=1}^{n} F_{4i-1}-(n-2).
\end{align*}

\noindent By Theorem 5.11 of \cite{koshy}, we have
\begin{align*}
    \lambda_2&=\displaystyle \sum_{i=1}^{n} F_{4i-1}-(n-2)\\
    &=\frac{F_{4n+3}-F_{4n-1}+4}{5}-1-(n-2)\\
    &=\frac{F_{4n+3}-F_{4n-1}+4}{5}-n+1\\
    &=\frac{F_{4n+3}-F_{4n-1}-5n+9}{5}.
\end{align*}
\end{proof}

\begin{rem}
The integer sequence $3,15,103,712,4892,33548\ldots$ arising from the eigenvalues of $A_n$ is not encoded in the On-Line Encyclopedia of Integer Sequences (OEIS).
\end{rem}

\noindent Regarding the spectral radius and the energy, we obtain the following results:

\begin{cor}
The spectral radius $\rho(A_n)$ of the matrix $A_n$, with $n\geq2$, is given by
\[
\frac{F_{4n + 3} - F_{4n - 1} - 5n + 9}{5}.
\]
\end{cor}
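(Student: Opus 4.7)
The plan is to read off the spectral radius directly from the spectrum already computed in Theorem~\ref{eigen}. That theorem shows that $A_n$ has exactly two distinct eigenvalues, namely $\lambda_1 = 2$ with multiplicity $n-1$ and $\lambda_2 = \frac{F_{4n+3}-F_{4n-1}-5n+9}{5}$ with multiplicity $1$. Since $\rho(A_n) = \max\{|\lambda_1|,|\lambda_2|\}$ by definition, the whole statement reduces to verifying that $\lambda_2 > 2$ for every $n \geq 2$, so that the maximum is attained at $\lambda_2$ and the expression claimed in the corollary coincides with $\rho(A_n)$.

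To make this comparison transparent, I would first rewrite the numerator in Lucas form. Splitting $F_{4n+3} - F_{4n-1}$ via the Fibonacci recurrence as $F_{4n+2} + F_{4n+1} - F_{4n-1} = F_{4n+2} + F_{4n}$ and then invoking the classical identity $L_m = F_{m-1}+F_{m+1}$ yields $F_{4n+3} - F_{4n-1} = L_{4n+1}$, so
\[
\lambda_2 \;=\; \frac{L_{4n+1} - 5n + 9}{5}.
\]
The inequality $\lambda_2 > 2$ is therefore equivalent to $L_{4n+1} > 5n + 1$, which is a much cleaner statement to attack.

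This Lucas inequality is then handled by a short induction. The base case $n = 2$ gives $L_9 = 76 > 11$, and for the inductive step one uses $L_{4(n+1)+1} - L_{4n+1} = L_{4n+4} + L_{4n+2}$, which exceeds $5$ for every $n \geq 2$, so the bound propagates; alternatively, one may simply invoke the exponential growth $L_{4n+1} \sim \phi^{4n+1}$ against the linear right-hand side $5n+1$. Combining this with the first step, we conclude $\rho(A_n) = \lambda_2$, which is the claimed formula. The argument is essentially routine once Theorem~\ref{eigen} is in hand; the only real subtlety is the initial simplification from $F_{4n+3}-F_{4n-1}$ to $L_{4n+1}$, because without it the comparison $\lambda_2 > 2$ would require estimating a difference of two large Fibonacci numbers, whereas the Lucas form makes the bound transparent.
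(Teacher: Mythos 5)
Your proposal is correct and takes essentially the same route as the paper, which simply cites Theorem~\ref{eigen} and declares the corollary a direct consequence. Your additional verification that $\lambda_2>2$ (via the simplification $F_{4n+3}-F_{4n-1}=L_{4n+1}$ and the induction $L_{4n+1}>5n+1$) is a welcome bit of rigor that the paper leaves implicit, but it does not change the underlying argument.
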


\begin{proof}
It is a direct consequence of Theorem \ref{eigen}.
\end{proof}

\noindent Let us consider the multigraph $G_{A_n}$ which has $A_n$ as its adjacency matrix. By the definition of the energy of a directed graph, we obtain the following result.

\begin{cor}
The energy of $G_{A_{n}}$ is given by
\[
\frac{F_{4n+3}-F_{4n-1}+5n-1}{5}.
\]
\end{cor}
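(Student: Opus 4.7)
The plan is to apply the definition of graph energy directly: since by the definition recalled in Section~\ref{preliminaries} the energy equals the sum of the absolute values of the eigenvalues of the adjacency matrix, and the full spectrum of $A_n$ has already been determined in Theorem~\ref{eigen}, the claim should reduce to a short algebraic computation.

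First I would recall, from Theorem~\ref{eigen}, that $A_n$ has only two distinct eigenvalues: $\lambda_1=2$ with multiplicity $n-1$, and
\[
\lambda_2=\frac{F_{4n+3}-F_{4n-1}-5n+9}{5}
\]
with multiplicity $1$. I would then verify that both eigenvalues are nonnegative, so that the absolute values can be dropped. The eigenvalue $2$ is trivially positive; for $\lambda_2$, one may appeal to the explicit numerical values $3,15,103,712,\ldots$ already recorded in the remark following Theorem~\ref{eigen}, or observe that $F_{4n+3}-F_{4n-1}$ grows exponentially in $n$ while the linear correction $-5n+9$ is negligible in comparison, so $\lambda_2>0$ for every $n\geq 2$.

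Finally, I would simply combine these observations and collect terms over a common denominator:
\begin{align*}
E(G_{A_n})
&=(n-1)\,|\lambda_1|+|\lambda_2|\\
&=2(n-1)+\frac{F_{4n+3}-F_{4n-1}-5n+9}{5}\\
&=\frac{10(n-1)+F_{4n+3}-F_{4n-1}-5n+9}{5}\\
&=\frac{F_{4n+3}-F_{4n-1}+5n-1}{5},
\end{align*}
which is exactly the stated formula.

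The main obstacle is essentially nonexistent: once Theorem~\ref{eigen} is available the corollary becomes a one-line algebraic consequence, and the only minor point that requires explicit justification is the positivity of $\lambda_2$, which is immediate from the small numerical cases together with the exponential growth of the Fibonacci sequence.
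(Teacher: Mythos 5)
Your proposal is correct and follows exactly the paper's route: the paper also derives the corollary directly from Theorem~\ref{eigen} by summing the absolute values of the eigenvalues $2$ (multiplicity $n-1$) and $\lambda_2$, and your algebra $10(n-1)-5n+9=5n-1$ checks out. You merely spell out the computation and the positivity of $\lambda_2$ that the paper leaves implicit.
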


\begin{proof}
It is a direct consequence of Theorem \ref{eigen}.
\end{proof}

\begin{rem}
The energy of $G_{A_{n}}$ coincides with the trace of $A_{n}$.
\end{rem}

\noindent In the next section, we study other properties of the matrices $A_n$; see Section \ref{powersgen} and Section \ref{inversegen}.

\section{General Results: $k$-Fibonacci and $k$-Lucas Numbers}\label{General}

In this section, we introduce the following family of matrices.

\begin{defin}
Let $A_{k,n}=\left(a_{i, j}\right)$ be the $n \times n$ matrix whose entries are given by $a_{i, i}=F_{k,2i-1} L_{k,2 i}$, and
\[
a_{i,j}=F_{k,2 j} L_{k,2j-1}, \quad i, j=1,2, \ldots, n, \quad i \neq j.
\]

\noindent Here, $F_{k,n}$ and $L_{k,n}$ denote the $k$-Fibonacci and $k$-Lucas numbers, respectively (see Section \ref{preliminaries}).
\end{defin}

\noindent As in Section \ref{Generic}, we provide explicit expressions for several classical properties of this family of matrices, including the determinant, inverse, trace, and matrix powers. Furthermore, we determine the spectral radius and the energy of the graphs associated with these matrices.

\subsection{Determinant}
For the determinant, the following result holds.

\begin{thm}\label{detgen}
Let $A_{k,n}$ be the $n \times n$ matrix defined as above. Then, its determinant is given by
\[
\det(A_{k,n})=2^n\left(1+\frac{1}{2}\left( \frac{F_{k,4n+3}-F_{k,4n-1}+F_{k,5}-1}{L_{k,4}-2}-1-n\right)\right).
\]
\end{thm}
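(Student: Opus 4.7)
The plan is to mimic the reduction used in the generic case of Section \ref{Generic}, but to avoid induction by triangularizing the matrix directly. Since the off-diagonal entries of $A_{k,n}$ in column $j$ are all equal to $F_{k,2j}L_{k,2j-1}$, subtracting row 1 from each row $r_i$ with $i\geq 2$ annihilates every off-diagonal entry in those rows except the $(i,1)$ and $(i,i)$ positions. Theorem \ref{restagen}, applied with the index shift $2n+1\mapsto 2i-1$, gives
\[
F_{k,2i-1}L_{k,2i}-F_{k,2i}L_{k,2i-1}=2,
\]
while the same identity with $n=0$ gives $F_{k,2}L_{k,1}-F_{k,1}L_{k,2}=-2$. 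Hence the reduced matrix $\bar{A}_{k,n}$ has the first row of $A_{k,n}$ on top, the value $-2$ throughout the remainder of column 1, the value $2$ on the rest of the diagonal, and zeros elsewhere.

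Next, to triangularize $\bar{A}_{k,n}$, I add $-\frac{1}{2}F_{k,2j}L_{k,2j-1}$ times row $j$ to row 1 for each $j=2,\ldots,n$. Since row $j$ (with $j\geq 2$) has only two nonzero entries, namely $-2$ in column 1 and $2$ in column $j$, this zeros out the $(1,j)$ entry and adds exactly $F_{k,2j}L_{k,2j-1}$ to the $(1,1)$ entry. Using $F_{k,1}L_{k,2}=F_{k,2}L_{k,1}+2$ to absorb the original $(1,1)$ term into the sum, the new $(1,1)$ entry becomes
\[
2+\sum_{j=1}^{n}F_{k,2j}L_{k,2j-1}.
\]
The resulting matrix is lower triangular with diagonal $2+\sum_{j=1}^{n}F_{k,2j}L_{k,2j-1}$ followed by $n-1$ copies of $2$, so
\[
\det(A_{k,n})=2^{n-1}\left(2+\sum_{j=1}^{n}F_{k,2j}L_{k,2j-1}\right).
\]

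Substituting the closed form from Theorem \ref{sumgen} for the inner sum and factoring $2^n$ in front then yields exactly the claimed expression. The computations are routine, so the main subtlety is bookkeeping: one must apply Theorem \ref{restagen} at the two different index shifts to cover both the $(1,1)$ correction and the diagonal entries of the reduced matrix, and one must verify that the column-1 contributions from the second round of operations feed into the $(1,1)$ slot so that the partial sum telescopes into the full sum $\sum_{j=1}^{n}F_{k,2j}L_{k,2j-1}$.
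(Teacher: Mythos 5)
Your argument is correct, but it takes a genuinely different route from the paper. The paper's proof writes $A_{k,n}=2I_n+\mathbf{1}v^{T}$ with $v=(F_{k,2}L_{k,1},\ldots,F_{k,2n}L_{k,2n-1})^{T}$ and invokes the rank-one determinant update (Cauchy's identity, $\det(D+\mathbf{1}v^{T})=\det(D)\,(1+v^{T}D^{-1}\mathbf{1})$) to land immediately on $\det(A_{k,n})=2^{n}\bigl(1+\tfrac{1}{2}\sum_{j=1}^{n}F_{k,2j}L_{k,2j-1}\bigr)$, after which Theorem~\ref{sumgen} finishes the computation. You instead triangularize by hand: the first round of row operations (row $i$ minus row 1) uses Theorem~\ref{restagen} at the shifts $n\mapsto i-1$ and $n\mapsto 0$ exactly as you say, and the second round correctly funnels each $F_{k,2j}L_{k,2j-1}$ into the $(1,1)$ slot while the identity $F_{k,1}L_{k,2}=F_{k,2}L_{k,1}+2$ completes the sum, giving $\det(A_{k,n})=2^{n-1}\bigl(2+\sum_{j=1}^{n}F_{k,2j}L_{k,2j-1}\bigr)$ --- the same intermediate expression. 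Your approach is more elementary and self-contained (it amounts to a constructive proof of the determinant lemma for this particular rank-one perturbation), and it improves on the paper's Section~\ref{Generic} treatment of the $k=1$ case by replacing the cofactor-expansion induction with a direct full triangularization; what the paper's route buys is brevity and the fact that the same decomposition $2I_n+\mathbf{1}v^{T}$ is reused verbatim for the spectrum, powers, and inverse in the later subsections.
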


\begin{proof}
First, according to Theorem \ref{restagen}, the identity
$F_{k,2j-1}L_{k,2j}-F_{k,2j}L_{k,2j-1}=2$ holds for all $j \geq 1$. 
Therefore, the matrix $A_{k,n}$ admits the following decomposition:
\[
A_{k,n}=D+\mathbf{1}\cdot v^{T}.
\]

\noindent Here, $D=\operatorname{diag}(2,2,\cdots,2)$, 
$v=\left(F_{k,2}L_{k,1},F_{k,4}L_{k,3},\cdots, F_{k,2n}L_{k,2n-1} \right)^{T}_{n\times 1}$, 
and $\mathbf{1}=(1,1,\cdots,1)_{n\times 1}^{T}$. 
Moreover, $\mathbf{1}v^{T}$ denotes the $n \times n$ matrix obtained as the product of $\mathbf{1}$ and $v^{T}$.

\noindent Note that $D$ is invertible. Thus, applying Cauchy’s determinant identity 
(see Example~1.3.24 in \cite{HornJohnson}, p.~66), we obtain:
\begin{align*}
    \det(D+\mathbf{1}v^{T})&= \det(D)\cdot(1+v^{T}D^{-1}\cdot\mathbf{1})\\
    &=\left(\prod_{i=1}^{n} 2 \right) \left(1 + v^{T}\cdot \frac{1}{2}I_{n}\cdot \mathbf{1}\right)\\
    &= 2^{n}\left(  1+ \frac{1}{2}\left( F_{k,2}L_{k,1}+F_{k,4}L_{k,3}+\cdots+ F_{k,2n}L_{k,2n-1} \right) \right).
\end{align*}

\noindent By Theorem \ref{sumgen}, we conclude that
\[
\det(A_{k,n})=\det(D+\mathbf{1}v^{T})=2^n\left(1+\frac{1}{2}\left( \frac{F_{k,4n+3}-F_{k,4n-1}+F_{k,5}-1}{L_{k,4}-2}-1-n\right)\right).
\]

\noindent This completes the proof.
\end{proof}

\noindent Theorem \ref{detgen} generates the following integer sequences, which are not encoded in the On-Line Encyclopedia of Integer Sequences (OEIS).

\begin{table}[h!]
\begin{tabular}{|l|l|}
\hline $k$ & Sequence $\det(A_{n, k})$ \\
\hline 2 & 6, 348, 23656, 1607504, 109216736, 7420311232, 504144305280, ...  \\
\hline 3 & 11, 2398, 570716, 135821824, 32323315136, 7692405726592, 1830663269698880, ... \\
\hline 4 & 18, 10980, 7071112, 4553754896, 2932589879072, 1888569667134016 ...    \\
\hline 5 & 27, 37854, 55039708, 80027590016, 116359895748032, 169186968307348864, ...  \\

\hline 6 & 38, 106780, 307953512, 888137513296, 2561387356578272, 7387037583821822656, ...  \\
\hline
\end{tabular}
\captionsetup{labelformat=empty}
\caption{Table 1. Particular cases of $\det(A_{k,n})$ for an arbitrary $k$ and $n\geq 1$.}
\end{table}

\normalcolor

\subsection{Trace}

For the trace, the following result holds.

\begin{thm}\label{tracegen}
The trace of the matrix $A_{k,n}$ is given by
\[
\mathrm{Tr}(A_{k,n})=\frac{F_{k,4n+3}-F_{k,4n-1}+F_{k,5}-1}{L_{k,4}-2}+n-1.
\]
\end{thm}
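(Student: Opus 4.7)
The plan is to mimic the strategy used for Theorem~\ref{tr} in the classical case, replacing the Fibonacci/Lucas identities with their $k$-Fibonacci/$k$-Lucas analogues that were already established in the proof of Theorem~\ref{sumgen}. By definition of the trace and the diagonal entries of $A_{k,n}$, one has
\[
\mathrm{Tr}(A_{k,n})=\sum_{i=1}^{n}F_{k,2i-1}L_{k,2i},
\]
so the entire argument reduces to evaluating this sum in closed form.

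The first step is to collapse each summand using the product-to-sum identity $F_{k,m}L_{k,n}=F_{k,m+n}-(-1)^{m}F_{k,n-m}$ already invoked in the proof of Theorem~\ref{sumgen}. Setting $m=2i-1$ and $n=2i$ gives $F_{k,2i-1}L_{k,2i}=F_{k,4i-1}+F_{k,1}=F_{k,4i-1}+1$, since $(-1)^{2i-1}=-1$ and $F_{k,1}=1$. This immediately yields
\[
\mathrm{Tr}(A_{k,n})=\sum_{i=1}^{n}F_{k,4i-1}+n.
\]

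The second step is to apply the known summation formula for $k$-Fibonacci numbers in arithmetic progression (Theorem~4 of \cite{FalconPlaza2009} with $m=4$ and $p=-1$), exactly as done inside the proof of Theorem~\ref{sumgen}, to conclude that
\[
\sum_{i=1}^{n}F_{k,4i-1}=\frac{F_{k,4n+3}-F_{k,4n-1}+F_{k,5}-1}{L_{k,4}-2}-1.
\]
Substituting this expression back produces the claimed closed form; no induction is required.

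Since both auxiliary identities have already been used inside the paper, this proof is essentially a one-paragraph computation and no real obstacle is expected. The only point meriting care is the sign bookkeeping when applying $F_{k,m}L_{k,n}=F_{k,m+n}-(-1)^{m}F_{k,n-m}$, because the parity is opposite to that of the analogous step in Theorem~\ref{sumgen} (where $m=2j$ gave $(-1)^{m}=+1$, whereas here $m=2i-1$ gives $(-1)^{m}=-1$); this is what produces the $+n$ correction instead of $-n$, and matches the $n-1$ term in the statement after the $-1$ coming from reindexing the sum.
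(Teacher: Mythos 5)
Your proposal is correct and follows essentially the same route as the paper's own proof: the same product-to-sum identity $F_{k,m}L_{k,n}=F_{k,m+n}-(-1)^{m}F_{k,n-m}$ with $m=2i-1$, $n=2i$, followed by the same summation formula from Theorem~4 of \cite{FalconPlaza2009}. Your remark on the sign bookkeeping is accurate and even slightly more careful than the paper's exposition.
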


\begin{proof}
By definition, the trace of a square matrix is the sum of its diagonal entries. Hence,
\[
\operatorname{tr}(A_{k,n})
=
\sum_{i=1}^n a_{ii}
=
\sum_{i=1}^n F_{k,2i-1}L_{k,2i}.
\]

\noindent Using the identity $F_{k,m}L_{k,n}=F_{k,m+n}-(-1)^{m}F_{k,n-m}$ with $m=2j-1$ and $n=2j$, we obtain
\[
F_{k,2j-1}L_{k,2j}=F_{k,4j-1}+F_{k,1}
\]
for every summand of the trace. Since $F_{k,1}=1$, the expression can be simplified as
\[
\sum_{i=1}^n F_{k,2i-1}L_{k,2i}
=
\sum_{j=1}^{n}F_{k,4j-1}+n.
\]

\noindent Now, using Theorem~4 of \cite{FalconPlaza2009} for $a=4$ and $r=-1$, we have
\[
\sum_{j=1}^{n}F_{k,4j-1}
=
\frac{F_{k,4n+3}-F_{k,4n-1}-1+F_{k,5}}{L_{k,4}-2}-1.
\]

\noindent Thus, the trace is given by
\[
\mathrm{Tr}(A_{k,n})
=
\frac{F_{k,4n+3}-F_{k,4n-1}+F_{k,5}-1}{L_{k,4}-2}+n-1,
\]
as claimed.
\end{proof}

\noindent Theorem~\ref{tracegen} generates the following integer sequences which are not encoded in the On-Line Encyclopedia of Integer Sequences (OEIS).

\begin{table}[h!]
\begin{tabular}{|l|l|}
\hline $k$ & Sequence $\mathrm{Tr}(A_{n, k})$ \\
\hline 2 &  6, 176, 5918, 200944, 6826054, 231884736, 7877254782, ...   \\
\hline 3 & 11, 1201, 142683, 16977734, 2020207204, 240387678966, 28604113589057, ... \\
\hline 4 & 18, 5492, 1767782, 569219368, 183286867450, 59017802097948, ...    \\
\hline 5 & 27, 18929, 13759931, 10003448758, 7272493484260, 5287092759604662, ...  \\

\hline 6 & 38, 53392, 76988382, 111017189168, 160086709786150, ...  \\
\hline
\end{tabular}
\captionsetup{labelformat=empty}
\caption{Table 1. Particular cases of $\mathrm{Tr}(A_{n, k})$ for an arbitrary $k$ and $n\geq 1$.}
\end{table}

\subsection{Spectral Radius}
Regarding the spectrum of the matrix $A_{k,n}$, we have the following result.

\begin{thm}\label{eigegen}
The matrix $A_{k,n}$ has exactly two distinct eigenvalues:
\[
\lambda_{1}=2 
\quad \text{(with multiplicity } n - 1\text{)}, 
\quad \text{and} \quad 
\lambda_{2}=\frac{F_{k,4n + 3} - F_{k,4n - 1} + F_{k,5}-1}{L_{k,4}-2}-n+1
\quad \text{(with multiplicity 1)}.
\]
\end{thm}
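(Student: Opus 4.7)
The plan is to exploit the rank-one decomposition $A_{k,n} = 2I_n + \mathbf{1}\,v^{T}$ already established in the proof of Theorem~\ref{detgen}, where $\mathbf{1}=(1,\ldots,1)^{T}$ and $v=(F_{k,2}L_{k,1},F_{k,4}L_{k,3},\ldots,F_{k,2n}L_{k,2n-1})^{T}$. Since $A_{k,n}$ differs from a scalar matrix by a rank-one update, its spectrum can be read off directly from the structure of this perturbation without invoking any inductive argument on the characteristic polynomial.

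First I would observe that for any vector $x\in\mathbb{R}^{n}$ with $v^{T}x=0$, we have $A_{k,n}x = 2x + \mathbf{1}(v^{T}x) = 2x$, so every such $x$ is an eigenvector with eigenvalue $2$. Since $v$ has all its entries nonzero (each $F_{k,2j}L_{k,2j-1}>0$), the hyperplane $\ker v^{T}$ has dimension exactly $n-1$, which yields $n-1$ linearly independent eigenvectors for the eigenvalue $\lambda_{1}=2$. Next I would compute $A_{k,n}\mathbf{1} = 2\mathbf{1} + \mathbf{1}(v^{T}\mathbf{1}) = \bigl(2 + v^{T}\mathbf{1}\bigr)\mathbf{1}$, so that $\mathbf{1}$ is an eigenvector with eigenvalue $2 + v^{T}\mathbf{1}$; since $\mathbf{1}\notin\ker v^{T}$ (as $v^{T}\mathbf{1}>0$), this eigenvector is linearly independent from the previous $n-1$, accounting for all $n$ dimensions.

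It then remains to identify $2 + v^{T}\mathbf{1}$ with the claimed expression for $\lambda_{2}$. By definition,
\[
v^{T}\mathbf{1} \;=\; \sum_{j=1}^{n} F_{k,2j}L_{k,2j-1},
\]
and Theorem~\ref{sumgen} evaluates this sum to $\tfrac{F_{k,4n+3}-F_{k,4n-1}+F_{k,5}-1}{L_{k,4}-2}-1-n$. Adding $2$ gives exactly
\[
\lambda_{2} \;=\; \frac{F_{k,4n+3}-F_{k,4n-1}+F_{k,5}-1}{L_{k,4}-2}-n+1,
\]
as stated.

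Alternatively, and more cleanly, one may argue through the characteristic polynomial by writing $A_{k,n}-\lambda I_n = -(\lambda-2)I_n + \mathbf{1}v^{T}$ and applying the matrix determinant lemma to obtain
\[
\det(A_{k,n}-\lambda I_n) \;=\; (2-\lambda)^{n-1}\bigl(2-\lambda + v^{T}\mathbf{1}\bigr),
\]
from which the multiplicities and eigenvalues are immediate. I expect no serious obstacle here: the only substantive input is the evaluation of $v^{T}\mathbf{1}$, which is exactly what Theorem~\ref{sumgen} delivers; the rest is a one-line application of the rank-one structure noted in the proof of Theorem~\ref{detgen}.
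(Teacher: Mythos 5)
Your proposal is correct, and it rests on exactly the same structural fact as the paper's proof: the rank-one decomposition $A_{k,n}=2I_n+\mathbf{1}v^{T}$ together with the evaluation of $v^{T}\mathbf{1}$ via Theorem~\ref{sumgen}. The difference is in how the spectrum is extracted. The paper computes the characteristic polynomial: it factors $A_{k,n}-\lambda I_n=(2-\lambda)\bigl(I_n+\tfrac{1}{2-\lambda}\mathbf{1}v^{T}\bigr)$ for $\lambda\neq 2$, applies Cauchy's determinant identity to get $\det(A_{k,n}-\lambda I_n)=(2-\lambda)^{n-1}\bigl(2-\lambda+v^{T}\mathbf{1}\bigr)$, and extends to all $\lambda$ by polynomial identity --- this is precisely your ``alternative'' via the matrix determinant lemma. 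Your primary argument instead exhibits the eigenvectors directly: the hyperplane $\ker v^{T}$ gives $n-1$ independent eigenvectors for $\lambda_1=2$, and $\mathbf{1}$ is an eigenvector for $2+v^{T}\mathbf{1}$. This is marginally more elementary (no determinant identity needed), it avoids the slightly delicate step where the paper must argue that a formula derived under the assumption $\lambda\neq2$ holds identically, and it yields the extra information that $A_{k,n}$ is diagonalizable, so the geometric and algebraic multiplicities coincide. One small point worth making explicit if you write this up: having $n$ independent eigenvectors with only two distinct eigenvalues, and $2+v^{T}\mathbf{1}\neq 2$ because $v^{T}\mathbf{1}>0$, is what pins the algebraic multiplicities at exactly $n-1$ and $1$ rather than merely bounding them below. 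Your identification of $\lambda_2$ with the closed form, obtained by adding $2$ to the sum in Theorem~\ref{sumgen}, matches the paper's computation exactly.
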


\begin{proof}
First, we recall that according to the proof of Theorem \ref{detgen}, the matrix $A_{k,n}$ admits the decomposition
$A_{k,n}=D+\mathbf{1}\cdot v^{T}$, where
$D=\operatorname{diag}(2,2,\cdots,2)$,
$v=\left(F_{k,2}L_{k,1},F_{k,4}L_{k,3},\cdots, F_{k,2n}L_{k,2n-1} \right)_{n\times 1}$,
and $\mathbf{1}=(1,1,\cdots,1)_{n\times 1}^{T}$.
Moreover, $\mathbf{1}v^{T}$ is the $n\times n$ matrix obtained as the product of $\mathbf{1}$ and $v^{T}$.
By definition, a scalar $\lambda \in \mathbb{C}$ is an eigenvalue of a matrix $A$ if and only if
\[
\det(A - \lambda I) = 0.
\]

\noindent Substituting $A$ by $A_{k,n}$ and assuming that $\lambda \neq 2$, we obtain
\begin{align*}
    A_{k,n}-\lambda I_{n}&= D+\mathbf{1}v^{T}-\lambda I_{n}\\ 
    &=2I_{n}+\mathbf{1}\cdot v^{T}-\lambda I_{n}\\
    &=(2-\lambda)I_{n}+\mathbf{1}\cdot v^{T}\\
    &=(2-\lambda)I_{n} \cdot\left( I_{n}+\frac{1}{2-\lambda}\mathbf{1}\cdot v^{T}\right).
\end{align*}

\noindent Thus,
\begin{align*}
\det(A_{k,n}-\lambda I_{n})&=\det\left((2-\lambda)I_{n}\cdot \left(I_{n}+\frac{1}{2-\lambda}\mathbf{1}v^{T}\right)\right)\\
&=\det\left((2-\lambda)I_n\right)\cdot\det\left(I_{n}+\frac{1}{2-\lambda}\mathbf{1}v^{T}\right).
\end{align*}

\noindent Applying Cauchy’s determinant identity (see Example 1.3.24 in \cite{HornJohnson}, page 66), we obtain for the second term
\[
\det\left(I_{n}+\frac{1}{2-\lambda}\mathbf{1}v^{T}\right)
=
1+\frac{1}{2-\lambda}v^{T}\cdot\mathbf{1}.
\]
Thus,
\begin{align*}
\det(A_{k,n}-\lambda I_{n})&=(2-\lambda)^{n}\cdot\left(1+\frac{1}{2-\lambda}v^{T}\mathbf{1}\right)\\
&=(2-\lambda)^{n}+(2-\lambda)^{n-1}v^{T}\mathbf{1}\\
&=(2-\lambda)^{n-1}\bigl(2-\lambda+v^{T}\mathbf{1}\bigr).
\end{align*}

\noindent Since this identity holds for all $\lambda \in \mathbb{C}$, the characteristic equation
$\det(A_{k,n} - \lambda I) = 0$ is equivalent to
\[
(2-\lambda)^{n-1}\bigl(2-\lambda + v^{T}\mathbf{1}\bigr) = 0.
\]

\noindent Hence, the eigenvalues of $A_{k,n}$ are $\lambda_{1} = 2$ with algebraic multiplicity $n-1$
and $\lambda_{2} = 2 + v^{T}\mathbf{1}$ with algebraic multiplicity 1.
According to Theorem \ref{sumgen}, this completes the proof.
\end{proof}

\noindent Theorem \ref{eigegen} generates the following integer sequences, which are not encoded in the On-Line Encyclopedia of Integer Sequences (OEIS).

\begin{table}[h!]
\begin{tabular}{|l|l|}
\hline $k$ & Sequence of $\lambda_{2}$ \\
\hline 2 & 6, 174, 5914, 200938, 6826046, 231884726, 7877254770    \\
\hline 3 &  11, 1199, 142679, 16977728, 2020207196, 240387678956, 28604113589045 \\
\hline 4 &  18, 5490, 1767778, 569219362, 183286867442, 59017802097938,   \\
\hline 5 & 27, 18927, 13759927, 10003448752, 7272493484252, 5287092759604652,... \\
\hline 6 & 38, 53390, 76988378, 111017189162, 160086709786142, ...  \\
\hline
\end{tabular}
\captionsetup{labelformat=empty}
\caption{Table 1. Particular cases of $\lambda_{2}$ for an arbitrary $k$ and $n\geq 1$.}
\end{table}

\noindent Regarding the spectral radius and the energy, we have the following results.

\begin{cor}
The spectral radius $\rho(A_{k,n})$ of the matrix $A_{k,n}$, with $n \geq 2$, is given by
\[
\frac{F_{k,4n + 3} - F_{k,4n - 1} + F_{k,5} - 1}{L_{k,4}-2} - n + 1.
\]
\end{cor}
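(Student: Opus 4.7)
The plan is to derive the result directly from Theorem \ref{eigegen}, which already delivers the complete spectrum of $A_{k,n}$. That theorem states that the only eigenvalues are $\lambda_1 = 2$, with algebraic multiplicity $n-1$, and
\[
\lambda_2 = \frac{F_{k,4n+3} - F_{k,4n-1} + F_{k,5}-1}{L_{k,4}-2} - n + 1,
\]
with algebraic multiplicity $1$. Since $\rho(A_{k,n})=\max\{|\lambda_1|,|\lambda_2|\}$, the entire proof reduces to the inequality $|\lambda_2|\geq 2$ for every $k\geq 1$ and $n\geq 2$.

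The simplest route to this inequality is to invoke the decomposition used in the proofs of Theorems \ref{detgen} and \ref{eigegen}, which showed that $\lambda_2 = 2 + v^{T}\mathbf{1}$, where $v^{T}\mathbf{1} = \sum_{j=1}^{n} F_{k,2j}L_{k,2j-1}$. For $k\geq 1$ the $k$-Fibonacci and $k$-Lucas sequences are strictly positive from index $1$ onwards, so every summand $F_{k,2j}L_{k,2j-1}$ is a positive integer; in particular, for $n\geq 2$ we have $v^{T}\mathbf{1}\geq F_{k,2}L_{k,1}+F_{k,4}L_{k,3}>0$. Consequently $\lambda_2>2=|\lambda_1|$, which forces $\rho(A_{k,n})=\lambda_2$, and Theorem \ref{sumgen} finally rewrites $\lambda_2$ in the closed form stated.

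No substantive obstacle is anticipated. The only subtlety worth flagging is the positivity check above, which is what makes the hypothesis $n\geq 2$ appear explicitly in the statement (for $n=1$ the matrix collapses to a $1\times 1$ scalar and the spectral radius formula would have to be verified separately, but this case is excluded by hypothesis).
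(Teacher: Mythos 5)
Your proposal is correct and follows the same route as the paper, which simply cites Theorem \ref{eigegen}; your added verification that $\lambda_2 = 2 + v^{T}\mathbf{1} > 2$ (via positivity of the summands $F_{k,2j}L_{k,2j-1}$) is a detail the paper leaves implicit but is exactly the right justification for why $\lambda_2$ dominates.
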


\begin{proof}
It is a direct consequence of Theorem \ref{eigegen}.
\end{proof}

\noindent Now, consider the multigraph $G_{A_n}$ having $A_n$ as its adjacency matrix.
From the definition of the energy of a directed graph, we obtain the following result.

\begin{cor}
The energy of $G_{A_{n}}$ is
\[
\frac{F_{k,4n + 3} - F_{k,4n - 1} + F_{k,5} - 1}{L_{k,4}-2} + n - 1.
\]
\end{cor}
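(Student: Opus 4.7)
The strategy is to derive this corollary directly from the spectral description provided in Theorem~\ref{eigegen}. By definition, the energy of the multigraph $G_{A_{n}}$ equals $\sum_{i=1}^{n}|\lambda_{i}|$, where $\lambda_{1},\dots,\lambda_{n}$ are the eigenvalues of its adjacency matrix $A_{k,n}$, counted with algebraic multiplicity.

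First, I would invoke Theorem~\ref{eigegen} to enumerate the spectrum: the eigenvalue $2$ with multiplicity $n-1$ and the eigenvalue
\[
\lambda_{2}=\frac{F_{k,4n+3}-F_{k,4n-1}+F_{k,5}-1}{L_{k,4}-2}-n+1
\]
with multiplicity $1$. To remove the absolute values in the definition of the energy, it suffices to check that $\lambda_{2}>0$. The cleanest way is to reuse the intermediate identity established inside the proof of Theorem~\ref{eigegen}, namely
\[
\lambda_{2}=2+v^{T}\mathbf{1}=2+\sum_{j=1}^{n}F_{k,2j}L_{k,2j-1}.
\]
Since every $k$-Fibonacci and $k$-Lucas number with positive index is a positive integer, this expression is strictly positive for all $k\geq 1$ and $n\geq 1$.

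With positivity of all eigenvalues in hand, the absolute values disappear and the energy coincides with the sum of the eigenvalues, hence with the trace:
\[
E(G_{A_{n}})=\sum_{i=1}^{n}\lambda_{i}=2(n-1)+\lambda_{2}=\mathrm{Tr}(A_{k,n}).
\]
Applying Theorem~\ref{tracegen} (or expanding $\lambda_{2}$ directly and simplifying $2n-2+\lambda_{2}$) yields the claimed expression $\tfrac{F_{k,4n+3}-F_{k,4n-1}+F_{k,5}-1}{L_{k,4}-2}+n-1$. The only conceptually non-trivial step is the positivity verification for $\lambda_{2}$, and the sum-of-positive-terms representation above makes it immediate, so the proof reduces to a one-line computation once Theorem~\ref{eigegen} and Theorem~\ref{tracegen} are invoked.
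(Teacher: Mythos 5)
Your proof is correct and follows the same route as the paper, which simply derives the corollary from Theorem~\ref{eigegen} by summing the absolute values of the eigenvalues. Your explicit positivity check for $\lambda_{2}$ via the representation $\lambda_{2}=2+v^{T}\mathbf{1}$ is a worthwhile detail that the paper leaves implicit, but it does not change the approach.
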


\begin{proof}
It is a direct consequence of Theorem \ref{eigegen}.
\end{proof}

\subsection{Powers}\label{powersgen}

Throughout this section, we consider $P=\mathbf{1}v^{T}$, and we note that Lemma \ref{P} also holds for matrices of type $A_{k,n}$. For the power $k$ of the matrix $A_{k,n}$, we have the following result.

\begin{rem}\label{decomp}
The matrix $A_{k,n}$ can be decomposed as $A_{k,n}= P+2I_n$, where $P=\mathbf{1}\cdot v^{T}$ is an $n\times n$ matrix with $\mathbf{1}^{T}=(1,1,\cdots,1)_{n\times1}$, $v^{T}=(v_1,\ldots,v_n)_{1\times n}$, with $v_i=F_{k,2i}L_{k,2i-1}$, and $I_n$ denotes the identity matrix of size $n$.
\end{rem}

\begin{lem}\label{P}
For every $k\in \mathbb{N}$, we have $P^k=(v^{T}\mathbf{1})^{k-1}P$.
\end{lem}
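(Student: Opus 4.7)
The plan is to prove the lemma by induction on $k$, exploiting the fact that $P=\mathbf{1}v^{T}$ is a rank-one matrix and therefore satisfies a very simple multiplicative rule. The crucial observation is that $v^{T}\mathbf{1}$ is a $1\times 1$ product, hence a scalar, so it can be freely factored out of any matrix product in which it appears.

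For the base case $k=1$, the statement reduces to $P=(v^{T}\mathbf{1})^{0}P=P$, which is immediate. For the inductive step, assume that $P^{k}=(v^{T}\mathbf{1})^{k-1}P$. Then
\[
P^{k+1}=P^{k}\cdot P=(v^{T}\mathbf{1})^{k-1}P\cdot P=(v^{T}\mathbf{1})^{k-1}(\mathbf{1}v^{T})(\mathbf{1}v^{T}).
\]
By the associativity of matrix multiplication, the inner product $v^{T}\mathbf{1}$ can be grouped as
\[
(\mathbf{1}v^{T})(\mathbf{1}v^{T})=\mathbf{1}(v^{T}\mathbf{1})v^{T}=(v^{T}\mathbf{1})\,\mathbf{1}v^{T}=(v^{T}\mathbf{1})P,
\]
since $v^{T}\mathbf{1}$ is a scalar. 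Substituting this into the previous expression yields
\[
P^{k+1}=(v^{T}\mathbf{1})^{k-1}(v^{T}\mathbf{1})P=(v^{T}\mathbf{1})^{k}P,
\]
which is exactly the desired identity for $k+1$, completing the induction.

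There is essentially no main obstacle here: the entire argument rests on the elementary property that a scalar can be pulled out of a matrix product. The only minor point requiring care is the notational distinction between the scalar $v^{T}\mathbf{1}=\sum_{i=1}^{n}v_{i}$ and the $n\times n$ matrix $P=\mathbf{1}v^{T}$; once this is kept in mind, the computation proceeds without difficulty. One could alternatively avoid induction by a direct computation, writing $P^{k}=\mathbf{1}(v^{T}\mathbf{1})(v^{T}\mathbf{1})\cdots(v^{T}\mathbf{1})v^{T}$ with $k-1$ scalar factors of $v^{T}\mathbf{1}$, but the inductive presentation is cleaner and aligns with the style used elsewhere in the paper.
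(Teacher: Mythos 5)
Your proof is correct and follows essentially the same route as the paper: induction on $k$, with the key step being the regrouping $(\mathbf{1}v^{T})(\mathbf{1}v^{T})=\mathbf{1}(v^{T}\mathbf{1})v^{T}=(v^{T}\mathbf{1})P$ via associativity and the fact that $v^{T}\mathbf{1}$ is a scalar. The only cosmetic difference is that you start the induction at $k=1$ while the paper starts at $k=2$, which changes nothing of substance.
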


\begin{proof}
We proceed by induction on $k$. For $k=2$, we have
\[
P^2=P\cdot P=(\mathbf{1}\cdot v^{T})(\mathbf{1}\cdot v^{T})
= \mathbf{1}\cdot (v^{T}\cdot\mathbf{1})\cdot v^{T}
=(v^{T}\cdot\mathbf{1})P.
\]
Here, note that $v^{T}\cdot\mathbf{1}=\displaystyle \sum_{j=1}^{n}v_{j}$. Thus, the base case holds.
\par\bigskip

\noindent Now, we assume that the identity holds for some fixed $k\in \mathbb{N}$, that is,
\[
P^{k}=(v^{T}\cdot \mathbf{1})^{k-1}P.
\]

\noindent We want to show that $P^{k+1}=(v^{T}\cdot \mathbf{1})^{k}P$. Indeed,
\begin{align*}
    P^{k+1} &= P^{k}\cdot P \\
    &= (v^{T}\cdot \mathbf{1})^{k-1}P\cdot P \\
    &= (v^{T}\cdot \mathbf{1})^{k-1}P^2 \\
    &= (v^{T}\cdot \mathbf{1})^{k-1} (v^{T}\cdot \mathbf{1})\cdot P \\
    &= (v^{T}\cdot \mathbf{1})^{k}\cdot P.
\end{align*}
This completes the inductive step, and thus the proof.
\end{proof}

\noindent For the power $k$ of the matrix $A_n$ we have the following result.

\begin{thm}
Let $A_{k,n}$ the matrix defined as before then for any $m\in \mathbb{N}$, 

\[A_{k,n}^{m}=2^{m}I_{n}\frac{\lambda_{2}^{m}-2^{m}}{\lambda_{2}-2}\cdot \mathbf{1}\cdot v^{T}\]

\noindent where $I_n$ denote the identity matrix of size $n$, $\lambda_{2}$ is the eigenvalue of Theorem \ref{eigegen}, $\mathbf{1}^{T}=(1,1,\cdots,1)$ and $v^{T}=(F_{k,2}L_{k,1},F_{k,4}L_{k,3},\cdots,F_{k,2n}L_{k,2n-1})$. 
\end{thm}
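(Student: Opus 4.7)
The plan is to exploit the decomposition $A_{k,n}=2I_n+P$ from Remark \ref{decomp}, where $P=\mathbf{1}v^{T}$. Since $2I_n$ is scalar and therefore commutes with every matrix, the binomial theorem applies cleanly, giving
\[
A_{k,n}^{m}=(2I_n+P)^{m}=\sum_{j=0}^{m}\binom{m}{j}2^{m-j}P^{j}.
\]
I would isolate the $j=0$ term as $2^{m}I_n$ and then feed all remaining terms into Lemma \ref{P}, which collapses every higher power $P^{j}$ (for $j\geq 1$) to the one-dimensional quantity $(v^{T}\mathbf{1})^{j-1}P$. This reduces the sum to a single scalar multiplying $P$.

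Next, I would identify the scalar $v^{T}\mathbf{1}$ with $\lambda_{2}-2$: indeed, in the proof of Theorem \ref{eigegen} it was shown that $\lambda_{2}=2+v^{T}\mathbf{1}$. Substituting $v^{T}\mathbf{1}=\lambda_{2}-2$ and pulling a factor of $(\lambda_{2}-2)^{-1}$ outside the sum, I would rewrite
\[
\sum_{j=1}^{m}\binom{m}{j}2^{m-j}(\lambda_{2}-2)^{j-1}
=\frac{1}{\lambda_{2}-2}\sum_{j=1}^{m}\binom{m}{j}2^{m-j}(\lambda_{2}-2)^{j},
\]
and then complete the missing $j=0$ term to recognize a full binomial expansion:
\[
\sum_{j=0}^{m}\binom{m}{j}2^{m-j}(\lambda_{2}-2)^{j}
=\bigl(2+(\lambda_{2}-2)\bigr)^{m}=\lambda_{2}^{m}.
\]
Subtracting the added term $2^{m}$ yields $\lambda_{2}^{m}-2^{m}$, which produces exactly the coefficient $\dfrac{\lambda_{2}^{m}-2^{m}}{\lambda_{2}-2}$ in front of $P=\mathbf{1}v^{T}$.

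There is no substantial obstacle in this approach: the commutativity of $2I_n$ with $P$ eliminates the need for induction on $m$, and Lemma \ref{P} does all of the combinatorial work. The only small subtlety is handling the factor $(\lambda_{2}-2)$ in the denominator, which is legitimate whenever $\lambda_{2}\neq 2$; the equality $v^{T}\mathbf{1}=\lambda_{2}-2$ ensures that this quotient is just shorthand for the polynomial $\sum_{j=0}^{m-1}\lambda_{2}^{m-1-j}2^{j}$, so the formula remains valid as a polynomial identity in $\lambda_{2}$ regardless of whether the denominator vanishes. As a sanity check, I would verify the cases $m=1$ (recovering $A_{k,n}=2I_n+P$) and $m=2$ (matching direct expansion $(2I_n+P)^{2}=4I_n+4P+(v^{T}\mathbf{1})P$) before presenting the general step.
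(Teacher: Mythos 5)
Your proposal is correct and follows essentially the same route as the paper: decompose $A_{k,n}=2I_n+P$, expand by the binomial theorem using commutativity, collapse the powers $P^{j}$ via Lemma \ref{P}, and sum the resulting scalar series to $\frac{\lambda_2^{m}-2^{m}}{\lambda_2-2}$ using $v^{T}\mathbf{1}=\lambda_2-2$. Note only that the displayed statement is missing a ``$+$'' between $2^{m}I_n$ and the rank-one term; your derivation correctly produces the intended formula $A_{k,n}^{m}=2^{m}I_n+\frac{\lambda_2^{m}-2^{m}}{\lambda_2-2}\,\mathbf{1}v^{T}$.
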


\begin{proof}
According to Remark \ref{decomp}  and considering that $(\mathbf{1}\cdot v^{T})\cdot(2I_n)=(2I_n)\cdot (\mathbf{1}\cdot v^{T})$ we apply the Binomial theorem, that is: 
\begin{align*}
A_{k,n}^{m}&= (1\cdot v^{T}+2I_n)^m\\
         &=\displaystyle \sum_{k=0}^{m}\binom{m}{k}(\mathbf{1}\cdot v^{T})^{k}(2I_{n})^{m-k}\\
         &=\displaystyle \sum_{k=0}^{m}\binom{m}{k}(\mathbf{1}\cdot v^{T})^{k}\cdot 2^{m-k}I_{n}\\
         &=2^{m}I_{n}+\displaystyle \sum_{k=1}^{m}\binom{m}{k}(\mathbf{1}\cdot v^{T})^k\cdot 2^{m-k}I_{n}\\
           &=2^{m}I_{n}+\displaystyle \sum_{k=1}^{m}\binom{m}{k}P^k\cdot 2^{m-k}I_{n}        
\end{align*}
Applying Lemma \ref{P} we have, 
\begin{align}\label{3.3}
    A_{n}^{m}=2^mI_{n}+\underbrace{\Big( \displaystyle \sum_{k=1}^{m}\binom{m}{k}2^{m-k}\cdot(v^{T}\cdot \mathbf{1})^{k-1}\Big)}_{(1)}\cdot P
\end{align}

\noindent In order to reduce $(1)$, we take $x=v^{T}\cdot \mathbf{1}$. Thus, $(1)$ can be written as 

\begin{align*}
S&=\displaystyle \sum_{k=1}^{m} \binom{m}{k}2^{m-k}x^{k-1}\\
xS&=\displaystyle \sum_{k=1}^{m} \binom{m}{k}2^{m-k}x^{k}\\
xS&=\displaystyle \sum_{k=0}^{m} \binom{m}{k}2^{m-k}x^{k}-2^{m}\\
xS&=(2+x)^{m}-2^{m} \\
S&=\frac{(2+x)^{m}-2^{m}}{x}
\end{align*}
\noindent  So, \ref{3.3} is exactly, 
\[A_{k,n}^{m}=2^{m}I_{n}+\frac{(2+v^{T}\cdot \mathbf{1})^m-2^{m}}{v^{T}\cdot \mathbf{1}}\cdot \mathbf{1}\cdot v^{T}\]
\noindent But considering  that $v^{T}\cdot \mathbf{1}=\displaystyle \sum_{j=1}^{n}v_j=\sum_{j=1}^{n}F_{k,2j}L_{k,2j-1}$ by Theorem \ref{sumgen} and Theorem \ref{eigegen} We are done.
\end{proof}

\normalcolor 

\subsection{Inverse of $A_{k,n}$}\label{inversegen}
For the inverse the following result holds 

\begin{thm}\label{inverse}
Let $A_{k,n}$ defined as before then $A_{k,n}$ is invertible matrix and its inverse is given by

\[
A_{k,n}^{-1}=
\frac{1}{2} I_n -\frac{1}{2(\lambda_{2})}\,\mathbf{1}v^T.
\]

\noindent where $\lambda_2$ is given in Theorem \ref{eigegen}.
\end{thm}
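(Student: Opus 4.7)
The plan is to view $A_{k,n}$ as a rank-one perturbation of a scalar multiple of the identity and invoke the Sherman--Morrison formula, one of the tools explicitly mentioned in the introduction. By Remark~\ref{decomp} we have the decomposition $A_{k,n}=2I_n+\mathbf{1}v^T$, and since $2I_n$ is invertible with inverse $\tfrac{1}{2}I_n$, Sherman--Morrison applies as soon as the scalar $1+v^T(2I_n)^{-1}\mathbf{1}=1+\tfrac{1}{2}v^T\mathbf{1}$ does not vanish. Because $v^T\mathbf{1}=\sum_{j=1}^{n}F_{k,2j}L_{k,2j-1}$ is a sum of positive integers, this quantity is strictly positive, which already yields the invertibility assertion.

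Next, I would substitute $A=2I_n$, $u=\mathbf{1}$, and the given $v$ into
\[
(A+uv^T)^{-1}=A^{-1}-\frac{A^{-1}uv^TA^{-1}}{1+v^TA^{-1}u},
\]
compute $A^{-1}u=\tfrac{1}{2}\mathbf{1}$, $v^TA^{-1}=\tfrac{1}{2}v^T$, and $A^{-1}uv^TA^{-1}=\tfrac{1}{4}\mathbf{1}v^T$, and then simplify to obtain
\[
A_{k,n}^{-1}=\frac{1}{2}I_n-\frac{\mathbf{1}v^T}{2(2+v^T\mathbf{1})}.
\]
The last step is to identify the denominator with $\lambda_2$: the proof of Theorem~\ref{eigegen} shows that $\lambda_2=2+v^T\mathbf{1}$, which converts the expression above into the stated form.

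As a self-contained alternative that avoids citing Sherman--Morrison, one can define $B=\tfrac{1}{2}I_n-\tfrac{1}{2\lambda_2}\mathbf{1}v^T$ and verify $A_{k,n}B=I_n$ by direct multiplication, using only the rank-one identity $(\mathbf{1}v^T)(\mathbf{1}v^T)=(v^T\mathbf{1})\mathbf{1}v^T$; the coefficient of $\mathbf{1}v^T$ in $A_{k,n}B-I_n$ collapses to zero precisely because $\lambda_2-v^T\mathbf{1}=2$. I do not anticipate any real obstacle: the argument is a clean algebraic computation, and the only care needed is to distinguish the scalar $v^T\mathbf{1}$ from the rank-one matrix $\mathbf{1}v^T$ when simplifying.
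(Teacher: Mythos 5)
Your proposal is correct and follows essentially the same route as the paper: both decompose $A_{k,n}=2I_n+\mathbf{1}v^T$, apply the Sherman--Morrison formula (the paper first factors out the $2$ and takes $u=\tfrac{1}{2}\mathbf{1}$, which is a trivial variant of your substitution $A=2I_n$, $u=\mathbf{1}$), verify nonvanishing of $1+\tfrac{1}{2}v^T\mathbf{1}$ via positivity of $v^T\mathbf{1}$, and identify $2+v^T\mathbf{1}$ with $\lambda_2$ through Theorem~\ref{eigegen}. Your optional direct verification of $A_{k,n}B=I_n$ is a harmless bonus not present in the paper.
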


\begin{proof}
By Remark \ref{decomp} We have that $A_{k,n}=2(I_{n}+\frac{1}{2}\mathbf{1}\cdot v^{T})$ so $A_{k,n}$ is invertible if and only if $I_{n}+\frac{1}{2}\mathbf{1}\cdot v^{T}$ is invertible , that is, 

\[A_{k,n}^{-1}=\frac{1}{2}\Bigl(I_{n}+ \frac{1}{2}\cdot \mathbf{1}v^{T}\Bigr)^{-1}\]

\noindent We fix $u=\frac{1}{2}\cdot \mathbf{1}$ and $w=v$ then $I_{n}+\frac{1}{2}\cdot \mathbf{1}\cdot v^{T}=I_{n}+uw^{T}$. By Sherman - Morrison formula (see \cite{GolubVanLoan}, page 65) the matrix $I_{n}+uw^{T}$ is invertible if $\mathbf{1}+w^{T}\cdot u\neq 0$ and its inverse is given by 

\[(I_{n}+uw^{T})^{-1}=I_{n}- \frac{uw^{T}}{1+w^{T}u}\]

\noindent  In our context, 

\[w^{T}u=v^{T}\cdot\Big(\frac{1}{2}\cdot \mathbf{1}\Big)=\frac{1}{2}v^{T}\cdot \mathbf{1}\]

\noindent  Since $v^{T}\cdot \mathbf{1}>0$ it follows that $\mathbf{1}+w^{T}u=\mathbf{1}+\frac{1}{2}v^{T}\cdot \mathbf{1}\neq 0$ and therefore the inverse exists. Applying the Sherman- Morrison formula we have 

\[\Bigl(I_{n}+\frac{1}{2} \mathbf{1}\cdot v^{T}\Bigr)^{-1}= I_{n}-\frac{\frac{1}{2}\cdot \mathbf{1}\cdot v^{T}}{1+\frac{1}{2}v^{T}\cdot \mathbf{1}}\]

\noindent Multiplying by $\frac{1}{2}$ we obtain 
\[A_{k,n}^{-1}=\frac{1}{2}I_{n}-\frac{1}{4}\cdot \frac{\mathbf{1}\cdot v^{T}}{1+\frac{1}{2}v^{T}\cdot \mathbf{1}}\]

\noindent Note that 

\[ 1+\frac{1}{2}v^{T}\cdot\mathbf{1}=\frac{2+v^{T}\cdot \mathbf{1}}{2}\]

\noindent We simplify the expression to obtain 

\[A_{k,n}^{-1}=\frac{1}{2}I_{n}-\frac{1}{2(2+v^{T}\cdot \mathbf{1})} \mathbf{1}\ v^{T}\]

\noindent which according with Theorem \ref{eigegen} completes the proof.
\end{proof}

\subsection*{Acknowledgment}
\noindent The first author expresses his deep gratitude to his family and his wife for their constant support, patience, and encouragement throughout the development of this work. He also wishes to dedicate this paper to ITENUA (Semillero de Investigación en Teoría de Números y Álgebra) on the occasion of its 15th birthday.\par \bigskip 

\noindent The second and the tird author acknowledges the institutional support Universidad Pedagógica y Tecnológica de Colombia.
\par \bigskip 
\paragraph{Author contributions} P. F. F. Espinosa , M.L.A. Torres and C.A.A. Cadena contributed equally to the manuscript.
\paragraph{Data Availability} No datasets were generated or analysed during the current study.
\subsection*{Declarations}
\paragraph{Competing interests} The authors declare no competing interests.

\end{document}